\newcommand{\Prob}{\mathbb{P}}
\newcommand{\Exp}{\mathbb{E}}
\newcommand{\R}{\mathbb{R}}
\newcommand{\C}{\mathbb{C}}
\newcommand{\iunit}{\mathbf{i}}
\newcommand{\x}{\mathbf{x}}
\newcommand{\ba}{\mathbf{a}}
\newcommand{\X}{\mathbf{X}}
\newcommand{\Z}{\mathbf{Z}}
\newcommand{\BSigma}{\mathbf{\Sigma}}
\newcommand{\BA}{\mathbf{A}}
\newcommand{\y}{\mathbf{y}}
\DeclareMathOperator{\Tr}{Tr}
\DeclareMathOperator{\diag}{diag}
\DeclareMathOperator{\diff}{d}
\newtheorem{definition}{Definition}[section]
\newtheorem{theorem}{Theorem}[section]
\newtheorem{corollary}{Corollary}[section]
\newtheorem{lemma}{Lemma}[section]
\theoremstyle{remark}
\newtheorem{remark}{Remark}
\title{Eigenvalue statistics of Elliptic Volatility Model with power-law tailed volatility}
\date{\today}
\author{Maltsev, Anna and Malysheva, Svetlana}
\begin{document}
\maketitle

\begin{abstract}
In this paper we study an ensemble of random matrices called Elliptic Volatility Model, which arises in finance as models of stock returns. This model consists of a product of independent matrices $X = \Sigma Z $ where $Z$ is a $T$ by $S$ matrix of i.i.d. light-tailed variables with mean 0 and variance 1 and $\Sigma$ is a diagonal matrix. In this paper, we take the randomness of $\Sigma$ to be i.i.d. heavy tailed. We obtain an explicit formula for the empirical spectral distribution of $X^*X$ in the particular case when the elements of $\Sigma$ are distributed as Student's t with parameter 3. We furthermore obtain the distribution of the largest eigenvalue in more general case, and we compare our results to financial data.

\end{abstract}

\section{Introduction}
A key problem in random matrix theory is understanding eigenvalue properties when the matrix dimensions are large. There is a large body of work on properties of the Sample Covariance Matrix Ensemble. The eigenvalue distribution has been shown to be Marchenko-Pastur in a very general case for i.i.d. variables with a variance, and then similar results were extended to matrices with various correlation structures. In this paper, we explore a random matrix ensemble originating from financial mathematics where the entries have a variance and are uncorrelated but are not independent. This dependence structure and their somewhat heavy tails result in a different eigenvalue density in the limit of large dimension. Unlike Marchenko-Pastur distribution, the eigenvalue density has a heavy tail as well. A similar eigenvalue density has been found in multiple applied fields, including in calcium imaging data in various types of tissue \cite{korovsak2019random, norris2023meaningful}, in machine learning \cite{PhysRevE.108.L022302}, and in finance \cite{plerou2002random}. The breadth of applications where such distributions are found may indicate a new universal phenomenon.

In financial mathematics, a volatility process is commonly defined as
$$
X_t=\sigma_t Z_t,
$$
where $Z_t$ are independent $\mathcal{N}(0, 1)$ random variables (further, we will call them noise), and variables $Z_t$ and $\sigma_t$ are independent. The process $\sigma_t$ is called volatility and can be modelled in multiple ways.
For example, in the celebrated Black--Scholes model, centred log-returns of the price can be modelled as
$
X_t= \sigma_0\left(B_t-B_{t-1}\right),
$
where $B_t$ is Brownian motion, the volatility equals $\sigma_0$ for any $t.$

A random variable $X$ is called heavy-tailed (or fat-tailed, Pareto-tailed, power-law tailed) with if a power law can approximate its density $p(x)$ for the large $x:$
\begin{equation}
    p(x) \underset{|x|\to \infty}{\sim} \frac{c}{|x|^{\alpha+1}}.
\end{equation}
For arbitrary $\alpha$ such tails are regularly varying, and $\alpha$ is referred to as tail exponent. A canonical example of a heavy-tailed distribution with tail exponent $\alpha$ is the Student's t distribution with $\alpha$ degrees of freedom, which we abbreviate as Student($\alpha$) throughout the manuscript.
In his work \cite{Mandelbrot1963certain}, Mandelbrot argued using the example of cotton price changes that the empirical distribution of price changes is better approximated by an  $\alpha$-stable distribution other than Normal, i.e. a distribution with tail parameter $\alpha$. The log-returns of stock prices in developed countries are believed to follow a power law with exponent $\alpha\approx 3$ \cite{gabaix_theory_2003}. This tail property is called cubic law. Figure \ref{fig:cubic_law} illustrates the cubic law for log-returns of three examples of major companies.

\begin{figure}[h]
    \centering
    \includegraphics[width=14cm]{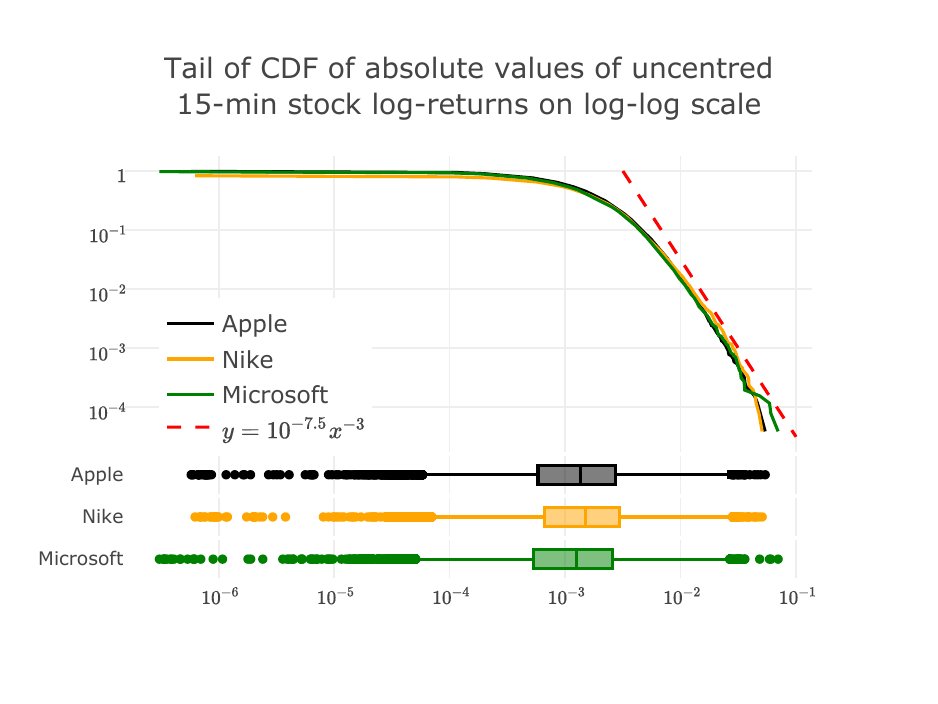}

    \caption{Illustration of the cubic law. The tail $\overline{F}(x)= 1- F(x),$ where $F(x)$ is empirical c.d.f. of returns of the chosen stock has slope $\approx 3$ when plotted on a log-log scale. The box-whiskers plot displays the distribution of logarithms of log returns. On the left, plot for three major companies. \textit{Data from polygon.io.}}
    \label{fig:cubic_law}
\end{figure}

The heaviness of the tails in stock log-returns is important for portfolio optimisation. If risky assets returns are i.i.d. and the second moment exists, investing equally into each asset, i.e. diversifying the portfolio reduces risks, and the distribution of portfolio returns can be approximated using the Central Limit Theorem (see e.g., \cite{Samuelson1967GeneralPT}). The diversification strategy may not remain optimal for the distributions with heavier tails \cite{Fama1965PortfolioAI}, \cite{Samuelson1967EfficientPS}. For example, in the case of Cauchy distributed price changes, the diversified portfolio will have a similar risk distribution as the non-diversified because the sample mean of the i.i.d. Cauchy random variables has Cauchy distribution with the same parameters. Considering even heavier tails would lead to the optimality of the non-diversification of the portfolio \cite{Ibragimov2015}.

For the sequence of random matrices $\left\{\X_T\right\}$ of the size $S\times T, $ where $\frac{S}{T}\to y>0,$ with i.i.d. entries with $0$ mean and variance $\sigma^2,$ the limiting spectral distribution of $\frac{\X_T \X_T^*}{T}$ exists, and is called Marchenko-Pastur law. This holds for heavy-tailed variables as well with tail parameter $\alpha >2$, and the limiting spectral distribution changes only for $\alpha < 2$. However, it is well-known that Marchenko-Pastur law does not approximate the spectrum of a stock returns correlation matrix, even though the returns have a tail parameter close to 3, i.e. a lot bigger than 2. This discrepancy occurs because of correlations or dependence between stocks. In \cite{Papanicolaou2016factor}, it is demonstrated that a factor model with any number of factors (a model with $k$ rank one matrices added to an i.i.d. random matrix) does not approximate stock return correlation eigenvalues either.

A model that does approximate the stock returns correlation eigenvalues well is the Student-Wishart Elliptic Volatility Matrix \cite{Biroli2007student}. In this paper we will be concerned with models that generalise this. We introduce a definition here:
\begin{definition}\label{d:elvol}
Let $T\times S$ random matrix $\X$ is an \textbf{Elliptic Volatility Matrix (EVM)} if
 \begin{equation}\label{e:X}\X = \left(\sigma_t Z_{t,s}\right)_{\substack{t\leq T\\ s\leq S}},\end{equation}
where random variables $Z_{t, s}$ are independent identically distributed random variables with a finite variance and
$\sigma_t$'s are independent of $\left(Z_{t, s}\right)_{\substack{t\leq T\\ s\leq S}}$ are random variables whose empirical cumulative distribution function converges almost surely to $F(x), $ which is the c.d.f. of  some heavy-tailed random variable  $\mathcal{\sigma}$ with tail exponent $\alpha.$ We denote $F^\prime(x) =: f(x).$ Furthermore, we define the  \textbf{Elliptic Volatility Sample Covariance Ensemble (EVSCE)} as the following random matrix ensemble
\begin{equation}\label{e:defA}
    \BA:=\frac{\X^* \X}{T},
\end{equation}
when $T, S \to +\infty$ and $\frac{T}{S} \to y,$ where $0<y<+\infty.$ Denote $\mathbf{Z} := \left(Z_{t, s}\right)_{\substack{t\leq T\\ s\leq S}}$ and $\mathbf{\Sigma} = \diag\left(\sigma_t\right)_{t\leq T}.$
\end{definition}
\noindent Notice, that in the above definition, $\X = \mathbf{\Sigma} \mathbf{Z}$ for the diagonal matrix $\mathbf{\Sigma}$ with $\mathbf{\Sigma}_{tt}=\sigma_t$ and then
\begin{equation}
    \frac{\X^* \X}{T} = \frac{\mathbf{Z}^*\mathbf{\Sigma}^2 \mathbf{Z}} {T}.
\end{equation}
\noindent The Student-Wishart is defined with $\sigma_t$'s i.i.d. Student-t distributed and $Z_{ts}$ i.i.d. Normally distributed.

The EVSCE has many limitations. It cannot fully describe the market data, as it is well-known that meaningful stock correlations, e.g. stocks in similar industries, account for some of the largest eigenvalues \cite{plerou2002random}. The discrepancy between EVSCE and market data was demonstrated definitively in \cite{Bouchard2012notelliptical} using a copula method. In \cite{Chicheportiche2013ANF}, a second volatility factor is suggested. Nevertheless, understanding the spectrum of EVSCE can be valuable as it helps elucidate mechanisms by which of the large eigenvalues of a correlation matrix can arise via dependence  and heavy tails in the distribution of the entries. Two limitations that could be relaxed in a future work are volatility clustering (“large changes tend to be followed by large changes, of either sign, and small changes tend to be followed by small changes” \cite{Mandelbrot1963certain}) and the “leverage effect” (negative past returns tend to increase future volatilities and positive past returns tend to decrease future volatilities). The ``leverage effect" could be studied via a study of dependence in the $\sigma_t$'s. Volatility clustering is already accounted for in EVSCE as the spectrum of $\frac{\X^* \X}{T}$ is preserved under the permutations of the rows of $\X$ but a reasonable model design for $\sigma_t$'s with dependence is left for future work.

The first object of study in this paper is the empirical spectral measure of an EVSCE. For a Hermitian $N \times N$ matrix $\BA$ with eigenvalues $\lambda_1, \lambda_2 \dots \lambda_N$, the probability measure $\mu_\BA$ is called its empirical spectral measure (ESM) if
\begin{equation}
    \mu_\BA := \frac{1}{N}\sum_{i=1}^{N} \delta_{\lambda_i}.
\end{equation}
The corresponding c.d.f.
\begin{equation}
    F^\BA(x): = \frac{1}{N}\#\left\{j\leq N: \lambda_j \leq x \right\}
\end{equation}
is called empirical spectral distribution (ESD) of matrix $\BA.$ Here, $\#E$ denotes the cardinality of the set $E.$
If for a given sequence of Hermitian matrices $\left\{\BA_N\right\}$
\begin{equation}
    F^{\BA_N}(x) \to F(x),
\end{equation}
for all $x\in \R$
where $F(\cdot)$ is a c.d.f. of probability measure $\mu,$ $F(\cdot)$ is called limiting spectral distribution (LSD) of this sequence, and $F^\prime(\cdot)$ is called limiting spectral density.
Note, that if limiting spectral density exists, re-normalised to probability scale histograms of matrices spectrum converge to limiting density curve when bins length are decaying to $0.$

The first result of this paper is a computation of the limiting spectral density for the EVSCE with $\sigma_t$ i.i.d. with Student(3) distribution and a general distribution $Z_{ts}$ provided it has finite moments. Our approach is via the Stieltjes transform, which for a probability measure $\mu(x)$ on the real line is defined as
\begin{equation}
    m_{\mu}(z):=\int_{-\infty}^{\infty} \frac{1}{x-z} \diff \mu(x).
\end{equation}
The statement \cite[Theorem~4.3]{BS10} provides a formula for the Stieltjes transform of the limiting density of Elliptic Volatility Model (without any requirements on volatility empirical moments convergence), which can be reduced to \eqref{eqn:sttransf3} for the case of the Student(3) volatility. It is known, that for the measure $\mu$ on the real line with density function $\rho(x)$ for $x\in \R$
\begin{equation}\label{e:invtrans}
    \rho(x)= \frac{1}{\pi} \lim_{\varepsilon\to 0^{+}} \Im m_\mu(x+\iunit\varepsilon).
\end{equation}
Using \eqref{eqn:sttransf3}  we carefully follow the construction of the solution of a quartic polynomial to find the solution with imaginary part in $\C^{+}$ and to show that it is unique. Our construction furthermore allows us to obtain the exact expression for the limit its imaginary part when approaching the real line. This new approach to solving a self-consistent equation for a Stieltjes transform directly, using a carefully constructed solution whose imaginary part is then easy to understand, could be useful in finding explicit limiting densities in other random matrix ensembles.

Our second object of study in this paper is the largest eigenvalue in an EVSCE.
In the simplest case, when $\mathbf{\Sigma} = \mathbf{1}$, the position of the largest eigenvalue depends on the existence of the fourth moment of the distribution of $Z_{ts}$. When the fourth moment is finite, the largest eigenvalue will converge to the spectral edge \cite{bai_no_1998}. When the fourth moment does not exist, the largest eigenvalue will diverge \cite{bai_note_1988}. Multiple studies regarding the $k$th largest eigenvalue were conducted when entries of $\Z$ regularly vary with exponent $0<\alpha<4.$
Soshnikov \cite{Soshnikov2005PoissonSF} gives the weak limit in the case of the Cauchy distribution of the matrix entries. Biroli et al. \cite{Biroli2007TopEV}, using physical methods, extend this result to regularly varying tails with index $0 <\alpha < 4$. Auffinger-Ben Arous-Peche\cite{Auffinger2009PoissonConvergence} prove rigorously that a point process based on the largest eigenvalues of $\X \X^*$ converges to a Poisson point process depending on the distribution of the entries.
Their results were extended by allowing dependence in the entries of $\X,$ for example, in \cite{Davis2011LimitTF} $X_{i t}=\sum_j c_j Z_{i, t-j}$, with regularly varying noise $\left(Z_{i t}\right)$ with tail index $\alpha \in(0,4)$. The obtained point process would then depend on $\sum c_j^2$ as well. Other examples of estimating the largest eigenvalue after implementing a dependence structure in heavy-tailed sample covariance matrices include \cite{Davis2016ExtremeVA}, \cite{Heiny2016EigenvaluesAE}, \cite{Basrak2019ExtremeES}, \cite{Heiny2019TheEO}, \cite{Teimouri2020AsymptoticBO}. Additionally, in \cite{Janssen2016TheEO}, under the condition that the dimension $S$ is fixed and $T\to \infty,$ two cases with non-linear dependence are discussed: the tail of volatility dominates the tail of noise, and the tail of noise dominates the tail of volatility.

Understanding largest eigenvalues is important for applications in data science and machine learning, particularly as they pertain to Principal Component Analysis (PCA). In PCA, principal components are eigenvectors that correspond to the largest eigenvalues of a sample covariance matrix, and the largest eigenvalue shows the amount of variability of the data set that the principal component captures. Due to the BBP transition \cite{Baik2005phase}, one can often deduce meaningful information about the data from the largest eigenvalue and corresponding eigenvector. However, heavier-tails in the data, for example a diverging 4-th moment of the matrix entries, can lead to anomalously large eigenvalues with no information content (see example in \cite{Biroli2007TopEV}). In multiple examples of heavy-tailed time series with or without dependence, the largest eigenvalues are
essentially determined by the extreme order statistics from an array of i.i.d. random
variables. The extreme eigenvalues' asymptotic behaviour follows from the classical extreme value theory. Thus understanding behaviours of largest eigenvalues in a heavy-tailed setting can improve our understanding of conditions for effective use of PCA.

We will prove that under appropriate scaling the largest eigenvalue in EVSCE is approximated by the square of the maximal value of the $\sigma_t$'s and we find the scaling constant. Many of the papers mentioned above follow a common methodological blueprint. The first step is to show that the matrix $\X \X^{*}$ is well approximated by its diagonal. The second step is to derive the extremes of the diagonal of $\X \X^{*}.$ The largest eigenvalue is usually close to the maximum of some identically distributed random variables. This way, the largest eigenvalue of $\X\X^*$ is similar to the largest entry of $\X\X^*,$ usually found on the diagonal, and it is also similar to the square of the largest entry of $\X.$ In this paper we will adopt a similar methodological approach. Furthermore, we will estimate the error, then use simulation data to illustrate our theoretical results. We will also compare the largest eigenvalue of the data matrix with the largest eigenvalue of simulations.

Lastly, we perform explicit data analytics to illustrate our results via simulations and to compare them to real-world financial returns data.
Suppose that $S_t(\operatorname{Open})$ and $S_t(\operatorname{Close})$ denote the open and close prices of the stock on the $t$-th time interval. We are interested in log-return of the price on time interval $t,$ defined as
\begin{equation}
    X_t:= \log \frac{S_t(\operatorname{Close})}{S_{t}(\operatorname{Open})}.
    \label{eqn:log-return}
\end{equation}
We directly study the distribution of returns at a given time $t$, compute its standard deviation as an estimate of $\sigma_t$. Then we observe that the tail parameter of the $\sigma_t$ is approximately 3. We also note that the plots of spillovers for the EVSCE look like those from data. We further compare a simulation of the EVSCE to the data and to our analytic results, both for the maximum eigenvalue and for the limiting density.

This paper is organised as follows. In Section \ref{sec:density}, we obtain the explicit expression for the limiting spectrum of EVSCE in the case of  Student(3)-distributed volatility. In Section \ref{sec:maximaleig}, we study the maximal eigenvalue of EVSCE when the volatility's tail exponent is $0<\alpha<4.$ In Section \ref{sec:data}, we apply compare the EVSCE and our analytic results to a data matrix obtained from 15-min S\&P stock prices log-returns.

\section{Spectral properties of Elliptic Volatility matrix}
\label{sec:density}
The Stieltjes transform of limiting spectral distribution of matrix $\BA$ can be obtained, using the following simplification of \cite[Theorem~4.3]{BS10}.
\begin{theorem}
\label{thm:bsstieltjes}
Suppose that the entries of $\mathbf{Y}(n\times p)$ are complex random variables that are independent for each $n$ and identically distributed for all $n$ and satisfy $\mathrm{E}\left(\left|Y_{11}-\mathrm{E}\left(Y_{11}\right)\right|^2\right)=1$. Also, assume that $\mathbf{T}=\operatorname{diag}\left(\tau_1, \ldots, \tau_p\right)$, $\tau_i$ is real, and the empirical distribution function of $\left\{\tau_1, \ldots, \tau_p\right\}$ converges almost surely to a probability distribution function $H$ as $n \rightarrow \infty$. Set $\mathbf{B}:=\frac{1}{n} \mathbf{Y} \mathbf{T} \mathbf{Y}^*.$  Assume also that $\mathbf{Y}$ and  $\mathbf{T}$ are independent. When $p=p(n)$ with $p / n \rightarrow y>0$ as $n \rightarrow \infty$, then, almost surely, $F^{\mathbf{B}_n}$, the ESD of the eigenvalues of $\mathbf{B}_n$, converges vaguely, as $n \rightarrow \infty$, to a (nonrandom) d.f. $F$, where for any $z \in \mathbb{C}^{+} \equiv\{z \in \mathbb{C}: \Im z>0\}$, its Stieltjes transform $s=s(z)$ is the unique solution in $\mathbb{C}^{+}$ to the equation
$$
s=\frac{1}{y \int \frac{\tau d H(\tau)}{1+\tau s}-z}.
$$
\end{theorem}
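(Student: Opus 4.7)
The plan is to follow the standard Stieltjes transform / self-consistent equation approach of Marchenko--Pastur and Bai--Silverstein. Write $R(z) := (\mathbf{B} - z I_n)^{-1}$ for the resolvent and $m_n(z) := \frac{1}{n}\Tr R(z)$ for its normalised trace. The strategy is to derive a self-consistent equation for $m_n(z)$, pass to the limit $n \to \infty$ to obtain the fixed-point equation of the theorem, and then argue uniqueness of the solution in $\C^+$; the Stieltjes--Perron inversion formula applied to the unique $s(z)$ then recovers the claimed limiting distribution function $F$.

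First I would decompose $\mathbf{B}=\frac{1}{n}\sum_{j=1}^{p}\tau_j\mathbf{y}^{(j)}(\mathbf{y}^{(j)})^{*}$ in terms of the columns $\mathbf{y}^{(j)}$ of $\mathbf{Y}$, so that the identity $(\mathbf{B}-zI_n)R(z)=I_n$ together with taking normalised traces yields
\begin{equation*}
-z\,m_n(z) + \frac{1}{n^{2}}\sum_{j=1}^{p}\tau_j\,(\mathbf{y}^{(j)})^{*}R(z)\mathbf{y}^{(j)} = 1.
\end{equation*}
Applying Sherman--Morrison to peel off the rank-one contribution of the $j$-th column gives
\begin{equation*}
\frac{1}{n}(\mathbf{y}^{(j)})^{*}R(z)\mathbf{y}^{(j)} = \frac{\frac{1}{n}(\mathbf{y}^{(j)})^{*}R^{(j)}(z)\mathbf{y}^{(j)}}{1 + \frac{\tau_j}{n}(\mathbf{y}^{(j)})^{*}R^{(j)}(z)\mathbf{y}^{(j)}},
\end{equation*}
where $R^{(j)}(z)$ is the resolvent with that rank-one term removed; the key point is that $\mathbf{y}^{(j)}$ is then independent of $R^{(j)}(z)$.

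Next I would invoke concentration. Under the second-moment assumption on $Y_{11}$ (after the standard truncation--recentering--renormalisation, which does not alter the limit), the Bai--Silverstein quadratic-form estimate gives $\frac{1}{n}(\mathbf{y}^{(j)})^{*}R^{(j)}\mathbf{y}^{(j)}-\frac{1}{n}\Tr R^{(j)}\to 0$ almost surely, uniformly in $j$, and a rank-one perturbation bound replaces $\frac{1}{n}\Tr R^{(j)}$ by $m_n$ up to $O(1/n)$. Substituting back, averaging over $j$, and combining $p/n\to y$ with the almost-sure weak convergence of the empirical law of $\{\tau_j\}$ to $H$ leads to
\begin{equation*}
-z\,m_n(z) + y\int\frac{\tau\,m_n(z)}{1+\tau\,m_n(z)}\,dH(\tau) = 1 + o(1)\quad\text{a.s.},
\end{equation*}
which is exactly the self-consistent equation of the theorem after rearrangement.

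The hardest step is uniqueness of the solution in $\C^+$. For fixed $z\in\C^+$ I would verify that the map $\Phi_z(s) := \bigl(y\int\tau(1+\tau s)^{-1}dH(\tau) - z\bigr)^{-1}$ sends $\C^+$ into itself (the term $-z$ in the denominator supplies the needed positive imaginary part), and then show that any two candidate solutions $s_1,s_2\in\C^+$ satisfy the subtraction identity
\begin{equation*}
s_1 - s_2 = (s_1-s_2)\cdot\frac{y\int\tau^{2}\bigl((1+\tau s_1)(1+\tau s_2)\bigr)^{-1}dH(\tau)}{\bigl(y\int\tau(1+\tau s_1)^{-1}dH(\tau)-z\bigr)\bigl(y\int\tau(1+\tau s_2)^{-1}dH(\tau)-z\bigr)},
\end{equation*}
and that the modulus of the multiplying factor is strictly less than $1$, a fact that uses $\Im z>0$ crucially and forces $s_1=s_2$. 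Tightness of $\{F^{\mathbf{B}_n}\}$ (standard given the convergence of the empirical law of the $\tau_j$) combined with this uniqueness then pins down every subsequential limit of $m_n$ as $s(z)$, completing the argument. The main technical obstacle I anticipate is the uniform-in-$j$ quadratic-form concentration with only a finite second moment on $Y_{11}$, which is precisely what the truncation step is designed to handle.
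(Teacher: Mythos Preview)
Your sketch is a faithful outline of the standard Bai--Silverstein proof of this result: rank-one column decomposition, Sherman--Morrison, quadratic-form concentration after truncation, and the subtraction identity for uniqueness in $\mathbb{C}^{+}$. There is no genuine gap in the approach you describe.

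However, you should be aware that the paper does \emph{not} supply its own proof of this theorem. It is quoted verbatim as a simplification of \cite[Theorem~4.3]{BS10} and used as a black box; the paper's contribution begins only with Lemma~\ref{l:Stieltjes}, where the theorem is applied to the specific case $H=$ law of the squared renormalised Student(3). So there is nothing in the paper to compare your argument against: you have reconstructed (correctly, at the level of a sketch) the proof from the cited reference rather than anything the authors wrote themselves.
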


\begin{remark}

\begin{enumerate}
\item There is no requirement on the moment convergence of the empirical spectral distribution of $\mathbf{T}, $ thus $H$ can have any regularly varying tail.

\item While in Lemma \ref{l:Stieltjes} and Theorem \ref{t:density} we introduce an assumption of independence on $\sigma_t$'s we only use it for the application of Theorem \ref{thm:bsstieltjes}, which does not require independence. Thus this condition could potentially be relaxed for sequences of $\sigma_t$ such that the empirical distribution function of $\left\{\tau_1, \ldots, \tau_p\right\}$ converges almost surely to a Student(3).
\end{enumerate}
\end{remark}

The Stieltjes transform for the EVSCE model was obtained in \cite{Biroli2007student} in an integral form for a general Student's t distribution. Here we obtain an explicit expression of the Stieltjes transform in the particular case of the Student(3). The result follows directly from the Theorem given above.
\begin{lemma}\label{l:Stieltjes}
For $\mathbf{X}$ as in Definition \ref{d:elvol} with $\sigma_t$ distributed as independent Student(3) for all $t$, the Stieltjes transform of the limiting spectral distribution is given by \begin{equation}\label{e:Stieltjes}
\frac{1}{s(z)}+z=\frac{1}{\left(1+\sqrt{\frac{s(z)}{y}}\right)^2}.
\end{equation}

\end{lemma}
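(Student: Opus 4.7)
The plan is to apply Theorem \ref{thm:bsstieltjes} to $\BA = \tfrac{1}{T}\Z^*\BSigma^2\Z$, after which the lemma reduces to evaluating a single explicit integral. To match the notation of the theorem, I would rewrite $\BA = \tfrac{1}{S}\Z^*\bigl(\tfrac{S}{T}\BSigma^2\bigr)\Z$ and take $\mathbf{Y} := \Z^*$ (so $n = S$, $p = T$, and $p/n \to y$) together with $\mathbf{T} := \tfrac{S}{T}\BSigma^2$. The hypotheses are immediate: the $Z_{ts}$ are i.i.d. with unit variance by Definition \ref{d:elvol}, $\mathbf{T}$ is independent of $\Z$, and the empirical distribution of the rescaled entries $\tfrac{S}{T}\sigma_t^2$ converges almost surely (by Glivenko--Cantelli together with $S/T\to 1/y$) to the law $H$ of $\sigma^2/y$ for $\sigma$ distributed as Student(3). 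Theorem \ref{thm:bsstieltjes} then produces the self-consistent equation $s(z) = \bigl(y\,I(s(z)) - z\bigr)^{-1}$ with $I(s) := \int \tau(1+\tau s)^{-1}\,dH(\tau)$, so the content of the lemma is the closed-form evaluation of $I(s)$.

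For the integral, pushing the Student(3) density $f(x) = \tfrac{2}{\pi}(1+x^2)^{-2}$ (in the variance-one normalization consistent with the stated identity) forward under $x \mapsto x^2/y$ produces the density $\tfrac{2\sqrt{y}}{\pi}\tau^{-1/2}(1+y\tau)^{-2}$ on $\tau > 0$. The substitution $\tau = t^2/y$ then reduces $I(s)$ to
\begin{equation*}
I(s) \;=\; \frac{4}{\pi y}\int_0^\infty \frac{t^2\,dt}{\bigl(1+(s/y)t^2\bigr)(1+t^2)^2}.
\end{equation*}
I would evaluate this by partial fractions in $w = t^2$, writing
\begin{equation*}
\frac{w}{(1+aw)(1+w)^2} \;=\; \frac{-a/(a-1)^2}{1+aw} + \frac{1/(a-1)^2}{1+w} + \frac{1/(a-1)}{(1+w)^2}
\end{equation*}
with $a := s/y$, and integrating term by term against $dt$ using the elementary identities $\int_0^\infty (1+at^2)^{-1}dt = \pi/(2\sqrt{a})$, $\int_0^\infty (1+t^2)^{-1}dt = \pi/2$, and $\int_0^\infty (1+t^2)^{-2}dt = \pi/4$. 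A short algebraic simplification based on the factorization $a-1 = (\sqrt{a}-1)(\sqrt{a}+1)$ collapses the three resulting terms into the compact formula $\pi/\bigl(4(1+\sqrt{a})^2\bigr)$.

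Putting the pieces together gives $y\,I(s) = (1+\sqrt{s/y})^{-2}$, at which point the equation $1/s + z = y\,I(s)$ from Theorem \ref{thm:bsstieltjes} is exactly the claimed identity \eqref{e:Stieltjes}. The only obstacles are bookkeeping ones: one must track the scaling $\mathbf{T} = (S/T)\BSigma^2$ introduced to reconcile the prefactor $1/n$ of the theorem with the prefactor $1/T$ in the definition of $\BA$, and one must use a normalization of Student(3) consistent with the stated formula so that the multiplicative constants in $I(s)$ cancel cleanly. The analytic heart of the lemma is the partial-fraction computation above, and no further probabilistic input beyond Theorem \ref{thm:bsstieltjes} and Glivenko--Cantelli is required.
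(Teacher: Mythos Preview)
Your proposal is correct and follows essentially the same route as the paper: apply Theorem~\ref{thm:bsstieltjes} with $\mathbf{Y}=\Z^*$, identify the limiting law of the (rescaled) squared renormalized Student(3), and reduce the self-consistent equation to the evaluation of the single rational integral, which both you and the paper find equals $(1+\sqrt{s/y})^{-2}$. The only cosmetic difference is that you absorb the $S/T$ factor into $\mathbf{T}$ before invoking the theorem, whereas the paper applies the theorem to $\X^*\X/S$ and then rescales via $s(z)=y\,s_0(yz)$; your partial-fraction computation also makes explicit a step the paper simply asserts.
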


\begin{proof}
Matching the notation in Theorem \ref{thm:bsstieltjes}, we set $\mathbf{Y}:= \Z^*,$ $\mathbf{T}:=\BSigma^2,$ $n:=S,$ and $p:=T$ then
the theorem gives the Stieltjes transform of the matrix
$$
\frac{\X^* \X }{S} = \frac{T}{S}\BA = \frac{\Z^* \BSigma^2 \Z}{S},
$$
and in this case $y: = \lim_{T\to \infty} \frac{T}{S}.$

Let $s_0(z)$ be the limiting Stieltjes transform of $\frac{\mathbf{X}^* \mathbf{X}}{S},$ and $s(z)$ the limiting Stieltjes transform of $\frac{\mathbf{X}^* \mathbf{X}}{T}.$ Then
\begin{equation*}
    y s_0(yz) = s(z).
\end{equation*}
By Theorem \ref{thm:bsstieltjes}
\begin{equation*}
    s_0(z) = \frac{1}{y\int\frac{\tau\diff H(\tau)}{1+\tau s_0(z)}-z}.
\end{equation*}
Therefore,
\begin{equation}
    s(z) = y s_0(yz) = \frac{y}{y\int\frac{\tau\diff H(\tau)}{1+\tau s_0(yz)}-yz} = \frac{1}{\int\frac{\tau\diff H(\tau)}{1+\tau s_0(yz)}-z} = \frac{1}{\int\frac{\tau\diff H(\tau)}{1+\frac{\tau}{y}s(z)}-z}.
    \label{eqn:sttransfours}
\end{equation}

We will rewrite equation \eqref{eqn:sttransfours} for the case when the volatility has re-normalised Student($\nu$) with $\nu>2$ degrees of freedom.

The probability density function of standard Student($\nu$)
\begin{equation}
g_\nu(t)=\frac{\Gamma\left(\frac{\nu+1}{2}\right)}{\sqrt{\nu \pi} \Gamma\left(\frac{\nu}{2}\right)}\left(1+\frac{t^2}{\nu}\right)^{-(\nu+1) / 2}.
\end{equation}
It has $0$ mean and variance $\frac{\nu}{\nu-2}.$ The density of re-normalised Student($\nu$) (standard Student($\nu$) divided by $\sqrt{\frac{\nu}{\nu-2}}$) is

\begin{equation}
    f_{\nu}(t):= \sqrt{\frac{\nu}{\nu-2}}g_\nu\left(\sqrt{\frac{\nu}{\nu-2}}t\right)= \frac{\Gamma\left(\frac{\nu+1}{2}\right)}{\sqrt{(\nu-2) \pi} \Gamma\left(\frac{\nu}{2}\right)}\left(1+\frac{t^2}{\nu-2}\right)^{-(\nu+1) / 2}.
\end{equation}

The diagonal elements of $\mathbf{\Sigma}^2$ are distributed as the squared re-normalised Student's t distributed random variable, therefore the empirical distribution of diagonal elements of $\mathbf{\Sigma}^2$ has limiting density $h_\nu(\tau),$ that we will find below. Let $F_\nu(\cdot)$ be the c.d.f. of re-normalised Student's t distribution, and $H_\nu(\cdot)$ be the c.d.f. of the diagonal elements of $\mathbf{\Sigma}^2.$ For $\tau>0$ holds
$$
H_\nu(\tau) = F_\nu\left(\sqrt{\tau}\right) - F_\nu\left(-\sqrt{\tau}\right).
$$
Thus,
\begin{multline*}
h_\nu(\tau) = \frac{1}{2\sqrt{\tau}} \left(f_\nu\left(\sqrt{\tau}\right) +f_{\nu}\left(-\sqrt{\tau}\right)\right)\\ =\frac{\Gamma\left(\frac{\nu+1}{2}\right)}{\sqrt{(\nu-2) \pi} \Gamma\left(\frac{\nu}{2}\right)}\left(1+\frac{\tau}{\nu-2}\right)^{-(\nu+1) / 2} \times \frac{1}{\sqrt{\tau}},
\end{multline*}
for $\tau>0.$ Particularly, for $\nu=3$ we can compute
\begin{equation}
    h_3(\tau) = \frac{2}{\pi}(1+\tau)^{-2} \times \frac{1}{\sqrt{\tau}}.
\end{equation}
Equation \eqref{eqn:sttransfours} yields
\begin{equation}
  \frac{1}{s(z)}+z=\int_0^{+\infty} \frac{\tau h_3(\tau)\diff \tau}{1+\tau\frac{s(z)}{y}}
  =
  \frac{2}{\pi}\int_0^{+\infty}\frac{\sqrt{\tau} \, }{\left(1+\tau\right)^2\left(1+\tau\frac{s(z)}{y}\right)} \diff \tau
  =
   \frac{1}{\left(1+\sqrt{\frac{s(z)}{y}}\right)^2}
  \label{eqn:sttransf3}
\end{equation}
where the principal branch cut of the square root is taken.
\end{proof}

While the tail asymptotic of the Stieltjes transform is given in equation (11) of \cite{Biroli2007student}, in the following corollary we offer a simple proof in the case of Student(3) for volatilities: \begin{corollary}\label{c:tail3}
Let $\rho(x)$ be the limiting density of eigenvalues in the EVSCE with i.i.d. Student(3)-distributed $\sigma_t$'s. Then the tail asymptotic is given by
\begin{equation}
\lim_{x \rightarrow \infty} \frac{\rho(x)}{x^{2.5}} = \frac{2}{\sqrt{y}\pi}
\end{equation}
\end{corollary}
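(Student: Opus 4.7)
The plan is to extract the large-$x$ asymptotic of $\rho(x)$ directly from the Stieltjes inversion formula \eqref{e:invtrans} applied to the self-consistent equation \eqref{e:Stieltjes} of Lemma \ref{l:Stieltjes}. Since $\mu$ is a probability measure on $[0,\infty)$, its Stieltjes transform satisfies $s(z) \to 0$ and in fact $s(z) = -1/z + o(1/z)$ as $|z| \to \infty$. The heavy tail of $\rho$ is encoded in the subleading correction to this expansion, so the problem reduces to computing the next order in a perturbative expansion of \eqref{e:Stieltjes} near $z = \infty$, and then reading off the boundary imaginary part on the positive real axis.

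I would substitute the ansatz $s(z) = -1/z + a(z)$ with $a(z) = o(1/z)$. A short algebraic manipulation rewrites the left-hand side of \eqref{e:Stieltjes} as
\begin{equation*}
\frac{1}{s(z)} + z \;=\; \frac{a(z)\, z^2}{a(z) z - 1} \;=\; - a(z)\, z^2 \bigl(1 + O(a(z)z)\bigr).
\end{equation*}
For the right-hand side, the Taylor expansion $(1 + \sqrt{s/y})^{-2} = 1 - 2\sqrt{s/y} + O(s/y)$ is valid since $s(z) \to 0$. The crucial ingredient is the branch choice of the square root: setting $z = x + i\eta$ with $x$ large positive and $\eta \to 0^+$, the leading approximation $s(z) \approx -1/z$ has small positive imaginary part, so $s/y$ approaches the negative real axis from above, and the principal branch yields $\sqrt{s(z)/y} \to i/\sqrt{yx}$.

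Matching leading nontrivial orders gives
\begin{equation*}
- a(z)\, z^2 \;=\; 1 - \frac{2 i}{\sqrt{y x}} + O(x^{-1}),
\end{equation*}
so that $a(x + i 0^+) = -1/x^2 + 2 i / (\sqrt{y}\, x^{5/2}) + o(x^{-5/2})$. Since $\Im(-1/z) \to 0$ as $\eta \to 0^+$, all of the boundary imaginary part of $s$ at this order comes from $a$, and the Stieltjes inversion formula \eqref{e:invtrans} gives $\rho(x) = \pi^{-1} \Im s(x + i 0^+) \sim \tfrac{2}{\pi \sqrt{y}}\, x^{-5/2}$, which is the claimed asymptotic (the statement is to be read as $\rho(x) / x^{-5/2} \to 2/(\pi\sqrt{y})$).

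The main technical obstacle is justifying the branch of the square root on the physical solution in $\mathbb{C}^+$ and verifying that higher-order corrections to the ansatz do not contribute at order $x^{-5/2}$ to $\Im s$. Both should follow from the construction of the unique $\mathbb{C}^+$-valued solution of \eqref{e:Stieltjes} carried out in the main density theorem of Section \ref{sec:density}: that construction pins down the branch that makes $\rho$ positive, which forces $\sqrt{s/y} \to + i/\sqrt{y x}$ rather than the other root. Once the branch is fixed, the expansion is regular in the small parameter $1/\sqrt{x}$ and truncates cleanly at the desired order.
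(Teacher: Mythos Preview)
Your proposal is correct and follows essentially the same route as the paper's own proof: both use \eqref{e:Stieltjes} to first pin down $s(x+i0^+)=-1/x+o(1/x)$, then determine that the principal branch forces $\sqrt{s/y}\to i/\sqrt{yx}$, and finally read off the subleading imaginary part to get $\Im s(x+i0^+)\sim 2/(\sqrt{y}\,x^{5/2})$. The only cosmetic difference is that you organise the expansion through the ansatz $s=-1/z+a$ and match $-az^2$ against the Taylor expansion of the right-hand side, whereas the paper takes imaginary parts of both sides of \eqref{e:Stieltjes} directly; the underlying computation and the branch argument are the same.
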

\begin{proof}
First we observe that since the branch cut of the square root is principal and thus has a positive real part,
\begin{equation}
\left|\frac{1}{1+\sqrt{\frac{s(z)}{y}}}\right| \leq 1.
\end{equation}
Thus for large $x$, equation \eqref{e:Stieltjes} implies that
\begin{equation}
\frac{1}{s(x + i0^+)} = -x + o(x)
\end{equation}
which yields that $\Re s(x + i0^+) = -1/x + o(1/x)$ as well as that $|s(x + i0^+)| = \frac{1}{x} + o(1/x)$, which furthermore implies that $\Im s(x + i0^+) = o(1/x)$. This implies that $\arg(\sqrt{s(x + i0^+)})$ is near $\pi/2$. Now from equation \eqref{e:Stieltjes} we see that
\begin{equation}
\frac{-\Im s(x+i0^+)}{|s(x+i0^+)|^2} = \Im \frac{1}{1+ \frac{s}{y} + 2\sqrt{\frac{s(x+i0^+)}{y}}}= -2 \Im \sqrt{\frac{s(x+i0^+)}{y}}+ o\left(\frac{1}{x}\right)= \frac{2}{\sqrt{yx}} + o\left(\frac{1}{\sqrt{x}}\right)
\end{equation}
yielding that
$
\Im s(x+i0^+) = \frac{2}{x^{2.5}\sqrt y} + o(1/x^{2.5})
$
and via equation \eqref{e:invtrans} we obtain the corollary.
\end{proof}

\subsection{Derivation of the limiting density when $\nu=3.$}
Here we offer a derivation of the limiting density for EVSCE with Student(3) volatilities.
\begin{theorem}\label{t:density}
Let \begin{multline}\label{e:q}
q := y^6 (x-1)^6+6 y^5 (x-1)^3 \left(x^2+4 x+1\right)
\\
+3 y^4 \left(5 x^4+16 x^3+30 x^2+16 x+5\right)+3 y^2 \left(5 x^2+2 x+5\right)
\\+4 y^3 \left(6 x^{3/2} \sqrt{3 y^3 (x-1)^3+9 y^2 \left(x^2+7 x+1\right)
+9 y (x-1)+3}+
5 x^3+12 x^2-12 x-5\right)
\\
+6 y (x-1)+1,
\end{multline} and let $w_*$ be given by
\begin{multline}\label{e:w*}
12 x^2w_*:= \\
-y^2 \left(x^2+10 x+1\right)-
2 \sqrt[3]{q} +2 y (x-1)+1
\\
-\frac{2 \left(y^4 (x-1)^4+4 y^3 \left(x^3+3 x^2-3 x-1\right)+6 y^2 (x+1)^2+4 y (x-1)+1\right)}{\sqrt[3]{q}}.
\end{multline}
Furthermore let
\begin{equation}\begin{split}\label{e:ABC}
&A:=-\frac{y^2 \left(x^2+10 x+1\right)+2 y (x-1)+1}{2 x^2} \\
&B:=-\frac{4 y^3 (1 + x)}{x^2}\\
&C:=\frac{y^4 (x+1)^2 \left(x^2-14 x+1\right)+4 y^3 (x-1) (x+1)^2+6 y^2 (x+1)^2+4 y (x-1)+1}{16 x^4}.
\end{split}\end{equation}
and
let $R^{\pm} \in \mathbb{R}$ be given by
\begin{equation}\begin{split}\label{e:Rpm}
R^+ &:= 2 w_* - A\\
R^- &:= - 2w_* - A.
\end{split}\end{equation}
For $\mathbf{X}$ as in Definition \ref{d:elvol} with $\sigma_t$ distributed as independent Student(3) for all $t$, the limiting density of eigenvalues for $x>0$ of $\mathbf{A}$ as in \eqref{e:defA} is given by
\begin{equation}\label{e:rho}
\rho(x) = \frac{1}{2\pi}\sqrt{-R^- - \frac{2B}{\sqrt{R^+}}}.
\end{equation}
\end{theorem}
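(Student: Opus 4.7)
The plan is to convert the Stieltjes transform relation of Lemma \ref{l:Stieltjes} into an algebraic equation for $s(z)$, solve it explicitly via Ferrari's method, and extract the density via the inversion formula \eqref{e:invtrans}. Throughout I work with $z = x + \iunit 0^{+}$ for $x>0$ and pass to the real line at the end.

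First I would isolate $\sqrt{s/y}$ in \eqref{e:Stieltjes} and square both sides, producing a quartic equation in $s$ with polynomial coefficients in $x,y$. The depressing substitution $s = w + \gamma$ with $\gamma = (y + yx - 1)/(2x)$ eliminates the cubic term, and a direct expansion shows that the resulting depressed quartic is precisely
\begin{equation*}
w^{4} + Aw^{2} + Bw + C = 0
\end{equation*}
with $A,B,C$ as in \eqref{e:ABC}. Since $\gamma \in \R$, one has $\Im s = \Im w$, so it is enough to extract the positive imaginary part of the appropriate root $w$.

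Next I would apply Ferrari's method: write
\begin{equation*}
w^{4} + Aw^{2} + Bw + C = (w^{2} + w_{*})^{2} - \bigl[(2w_{*} - A)w^{2} - Bw + (w_{*}^{2} - C)\bigr],
\end{equation*}
and choose $w_{*}$ so that the bracketed quadratic in $w$ is a perfect square, which forces $B^{2} = 4(2w_{*} - A)(w_{*}^{2} - C)$. This is a cubic in $w_{*}$ (the resolvent cubic), and Cardano's formula applied to it produces the expression \eqref{e:w*}; a direct algebraic comparison of $\Delta_{1}^{2} - 4\Delta_{0}^{3}$ for the resolvent cubic against \eqref{e:q} identifies $q$ as the Cardano radicand. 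With $w_{*}$ in hand the depressed quartic factors as
\begin{equation*}
\bigl(w^{2} + \sqrt{R^{+}}\,w + w_{*} - \tfrac{B}{2\sqrt{R^{+}}}\bigr)\bigl(w^{2} - \sqrt{R^{+}}\,w + w_{*} + \tfrac{B}{2\sqrt{R^{+}}}\bigr) = 0,
\end{equation*}
whose four roots are $w = \tfrac{1}{2}\bigl(\pm\sqrt{R^{+}} \pm \sqrt{R^{\mp} \mp 2B/\sqrt{R^{+}}}\bigr)$, with $R^{\pm}$ as in \eqref{e:Rpm}.

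Finally I would identify the unique root lying in $\C^{+}$. Theorem \ref{thm:bsstieltjes} guarantees this root is unique, so only one of the four candidates is admissible. On the support of $\rho$ one checks that $R^{+} > 0$ while $R^{-} + 2B/\sqrt{R^{+}} < 0$, so the relevant inner square root is purely imaginary; the sign choice giving
\begin{equation*}
w = \tfrac{1}{2}\Bigl(-\sqrt{R^{+}} + \iunit\sqrt{-R^{-} - \tfrac{2B}{\sqrt{R^{+}}}}\Bigr)
\end{equation*}
then lies in $\C^{+}$, yielding $\Im s = \Im w = \tfrac{1}{2}\sqrt{-R^{-} - 2B/\sqrt{R^{+}}}$ and hence \eqref{e:rho} via \eqref{e:invtrans}. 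The main obstacle is the branch and sign bookkeeping throughout: squaring the Stieltjes equation introduces spurious roots, Cardano's cube root has multiple branches, and Ferrari's factorisation produces four candidates. Uniqueness in $\C^{+}$ from Theorem \ref{thm:bsstieltjes}, together with the large-$|z|$ asymptotics $s(z) \sim -1/z$, pin down the correct branches and the correct root, while for $x$ outside the support the radicand in \eqref{e:rho} becomes negative and the formula is understood to vanish.
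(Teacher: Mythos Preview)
Your proposal is correct and follows essentially the same route as the paper: square \eqref{e:Stieltjes} to obtain a quartic, depress it (your shift $\gamma=(y+yx-1)/(2x)$ matches the paper's), solve the resolvent cubic by Cardano to get $w_*$, and select the root in $\C^{+}$. The only place where the paper supplies more detail is the sign and branch bookkeeping you flag as ``one checks'': the paper shows the spurious solutions from squaring are always real, uses a discriminant computation to locate the spectral edge, proves $R^{+}>0$ by a short contradiction argument (if $R^{+}<0$ the four roots cannot form exactly one conjugate pair), and then uses $B<0$ to force $R^{-}+2B/\sqrt{R^{+}}<0$.
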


\begin{proof}[Proof of Theorem \ref{t:density}]
By equation \eqref{eqn:sttransf3}, the limiting density $\rho(x) = \lim_{\eta \downarrow 0} \Im s_*$ where $s_*$ has positive imaginary part and is the solution of the equation derived above in \eqref{e:Stieltjes}.
To find the solution $s_*$ we rewrite the equation as follows:
\begin{equation}
\sqrt{\frac{4 s }{y}} = \frac{s}{s z+1}-\left(\frac{s}{y}+1\right)
\end{equation}
Now we square both sides and multiply through by the denominator to obtain a quartic polynomial
\begin{equation}\label{e:quartic}
Q(s):=\frac{4 s (s z+1)^2}{y} - \left(s-\left(\frac{s}{y}+1\right)(s z+1)\right)^2 = 0.
\end{equation}
When we do this, we will introduce spurious solutions. We will first demonstrate that these spurious solutions are real for all values of $z>0$ and $y > 1$.

The spurious solutions will satisfy the following equation:
\begin{equation}
\sqrt{\frac{4 s }{y}} = -\left(\frac{s}{s z+1}-\left(\frac{s}{y}+1\right)\right)
\end{equation}
equivalent to
\begin{equation}\label{s:spurious}
\sqrt{\frac{4 s }{y}} -\frac{s}{y}= -\frac{s}{s z+1} +1.
\end{equation}
We notice that the RHS is a parabola in
$\sqrt{s/y}$ with zeros at 0 and $\frac{2}{\sqrt y}$ and maximum at 1. The left hand size is 1 at 1 and is strictly decreasing to
$1 - \frac{1}{z}$ as  $s\rightarrow \infty$. Thus there are two real solution to equation \eqref{s:spurious}
in the interval $(0, \frac{2}{\sqrt y})$ for any $z>0$.

The quartic equation was first solved by Cardano and Ferrari in 1540. Here we follow a more modern construction of the solution to a quartic polynomial using a resolvent cubic equation, see e.g. Theorem 4 in \cite{chavez2022complete}. Throughout this proof we use Mathematica to assist with labour-intensive computations, and our Mathematica notebook is attached to this manuscript. We know from algebra that a quartic polynomial has exactly two complex solutions if and only if its discriminant is negative. As we have shown that for $z>0$, $Q$ has real solutions, we deduce that when the discriminant is positive, Q has 4 real solutions and thus no solution with positive imaginary part. To find the spectral edge, it suffices to find $z>0$ where the discriminant is negative. Taking the discriminant of $Q$ we obtain
\begin{multline}
\text{Disc}(Q) = \\
-\frac{256}{y^6} \left(y^3 z^6-3 y^3 z^5+3 y^3 z^4-y^3 z^3+3 y^2 z^5+21 y^2 z^4+3 y^2 z^3+3 y z^4-3 y z^3+z^3\right)
\end{multline}
yielding the following equation, after division by common factors,
\begin{equation}\label{e:disceq}
1 - 3 y + 3 y^2 - y^3 + 3 y z + 21 y^2 z + 3 y^3 z + 3 y^2 z^2 -
 3 y^3 z^2 + y^3 z^3=0.
\end{equation}
For $y>0$, this equation has the following solutions
\begin{align}
z &= \frac{\left(\sqrt[3]{y}-1\right)^3}{y} \label{e:spEdge}
\\
  \text{
 or  } z &= \frac{3 y^{2/3}-3 \sqrt[3]{y}+2 y-2}{2 y} \pm i\frac{3 \sqrt{3} \left(\sqrt[3]{y}+1\right)}{2 y^{2/3}}  \label{e:othersol}
\end{align}
Noting that \eqref{e:othersol} has a non-zero imaginary part for all $y >0$ we deduce that \eqref{e:spEdge} yields the spectral edge.

Now we proceed to construct the solution of the quartic with positive imaginary part. First we transform the quartic $Q$ into a monic depressed quartic $\tilde Q$ via
\begin{multline} \label{e:depressed}
\tilde Q(s) = -\frac{y^2}{z^2} Q\left(s - \frac{1}{4} \left(-\frac{2 y}{z}-2 y+\frac{2}{z}\right)\right)
= s^4 + As^2 + Bs + C \\
\end{multline}
where we have set $A, B, C$ as in \eqref{e:ABC}.
We now construct and solve the resolvent cubic equation
\begin{equation}
P(w):= (2 w - A)(w^2 - C) - B^2/4=0.
\end{equation}
Recalling the equations \eqref{e:q} and \eqref{e:w*} for $q$ and $w_*$, we note that $w_*$ is a real solution of the above equation.

We notice that $w_*$ is indeed real whenever $q$ is real and $q$ is real whenever
\begin{equation}
3 y^3 (z-1)^3+9 y^2 \left(z^2+7 z+1\right)
+9 y (z-1)+3 >0
\end{equation}
We notice that this inequality is identical to \eqref{e:disceq} and is satisfied whenever $z$ is above the spectral edge. Thus $q$ and $w_*$ are real whenever $z$ is above the spectral edge.

Let $R^{\pm}$ be as in \eqref{e:Rpm}. Then for $ j, k \in \{0, 1\}$ the four solutions of the depressed quartic equation $\tilde Q$ are given by
\begin{equation}
2s = (- 1)^j \sqrt{ R^+} + (-1)^k \sqrt{R^- + (-1)^{j+1} \frac{2B}{\sqrt{R^+}}}
\end{equation}
We will prove that $R^+ >0$ for $z$ above the spectral edge using the established fact that exactly two of the solutions are real.

Suppose for contradiction that $R^+ < 0$. We take the standard branch cut of the square root along the negative $x$-axis, with $\sqrt{-1} = i$, making $\sqrt {R^+}$ purely imaginary with positive imaginary part. Recall also that $B < 0$ for $z, y > 0$, making $\Im \left(\frac{2B}{\sqrt{R^+}}\right) > 0$ . Then if $(- 1)^j \sqrt{ R^+} + (-1)^k \sqrt{R^- + (-1)^{j+1} \frac{2B}{\sqrt{R^+}}} \in \mathbb{R}$, we must have $j = k$, which would yield two real solutions. This would imply that the two solutions with $j \neq k$ must be complex conjugates. However taking the complex conjugate of the solution with $j=0 = k-1$ we check that its conjugate does not equal the solution with $j = 1 = k+1$:
\begin{equation}
\overline{\sqrt{ R^+} -  \sqrt{R^- - \frac{2B}{\sqrt{R^+}}}} = - \sqrt{ R^+} -  \sqrt{R^- +\frac{2B}{\sqrt{R^+}}}
\neq -\sqrt{ R^+} +  \sqrt{R^- + \frac{2B}{\sqrt{R^+}}}
\end{equation}
where for the last statement we recall that $R^- \in \mathbb{R}$ while $\frac{2B}{\sqrt{R^+}}$ would be purely imaginary. Thus by contradiction we have established that $R^+ >0$ and thus $\sqrt{R^+} >0$.

Thus $2\Im s = \Im \left( \sqrt{R^- \pm \frac{2B}{\sqrt{R^+}}} \right)$. We recall again that the solutions form exactly one conjugate pair, implying that one of $R^- \pm \frac{2B}{\sqrt{R^+}}$ is positive and the other is negative. As $B < 0$, we deduce that $R^- + \frac{2B}{\sqrt{R^+}} < 0,$
yielding that
\begin{equation}
\Im s = \frac{1}{2}\sqrt{-R^- - \frac{2B}{\sqrt{R^+}}}.
\end{equation}
We notice that $-R^- - \frac{2B}{\sqrt{R^+}}$ is continuous in $z$ as a complex variable for $z$ strictly above the spectral edge, thus the identity $\rho(x) = \frac{1}{\pi}\lim_{\eta \downarrow 0}\Im s(x+i\eta)$ yields the desired result.
\end{proof}

\section{Statistics of the maximal eigenvalue}
\label{sec:maximaleig}
In this part of the work we investigate the distribution of the rescaled maximal eigenvalue in the Elliptic Volatility Model. We notice that here we need a stronger assumption on the moments of the matrix entries $Z_{ts}$ from equation \eqref{e:X}. Suppose that $N\times N$ Hermitian matrix $\mathbf{H}$ has spectrum $\lambda_1, \dots \lambda_N.$ We denote
\begin{equation}
    \lambda_{\max}\left[\mathbf{H}\right]:= \max_{1\leq i \leq N} \left|\lambda_i\right|.
\end{equation}
Note, that $\lambda_{\max}\left[\cdot\right]$ is a norm on a linear space of $N\times N$ Hermitian matrices, i.e. it is positive for non-zero matrices and $\lambda_{\max}\left[\mathbf{H}_1+\mathbf{H}_2\right]\leq \lambda_{\max}\left[\mathbf{H}_1\right]+\lambda_{\max}\left[\mathbf{H}_2\right].$
We prove the following theorem
\begin{theorem}
Suppose $0<\alpha<4, $  $\alpha \neq 2.$ Let $\mathbf{\Sigma}$ be a $T\times T$ diagonal matrix i.i.d. power-law tailed diagonal entries $\sigma_i$ with tail exponent $\alpha.$ Suppose that $\mathbf{X}$ is a $T\times S$ matrix, where $\frac{T}{S}\to y>0,$ independent of $\mathbf{\Sigma}, $ whose entries are i.i.d., have $0$ mean and variance $1,$ and have all moments.
Denote as $H(\cdot)$ a c.d.f. of random variable $\sigma_i^2.$
Let $a_T$  be a solution of $1-H(a_T)=\frac{1}{T}.$
Then
\begin{equation}\frac{\lambda_{\max}\left[\mathbf{\Sigma} \mathbf{Z} \mathbf{Z}^* \mathbf{\Sigma}\right]}{Sa_T} \underset{d}{\to}{\xi},
\end{equation}
where $\Prob(\xi\leq x) = \exp\left(-x^{-\frac{\alpha}{2}}\right).$
\label{thm:max_eigenvalue}
\end{theorem}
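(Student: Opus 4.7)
The plan is to exploit the identity of nonzero spectra $\lambda_{\max}[\BSigma\Z\Z^*\BSigma] = \lambda_{\max}[\Z^*\BSigma^2\Z]$ and study the latter $S\times S$ matrix through the representation
\[
\Z^*\BSigma^2\Z \;=\; \sum_{i=1}^T \sigma_i^2\,\zeta_i\zeta_i^*,
\]
where $\zeta_i\in\R^S$ denotes the $i$-th row of $\Z$ viewed as a column vector. The rows $\zeta_i$ are i.i.d.\ with typical norm $\|\zeta_i\|^2\approx S$ and are nearly orthogonal ($|\langle\zeta_i,\zeta_j\rangle|=O(\sqrt{S})$ for $i\neq j$), while only a handful of $\sigma_i^2$ are extremely large (of order $a_T$). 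Thus the top eigenvalue should be pinned to the single largest $\sigma_i^2$ multiplied by $S$, and the scheme described in the manuscript (diagonal approximation followed by extreme value theory) is implemented by a rank-$k$ splitting.

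Let $k=k(T)\to\infty$ slowly (e.g.\ $k=\lfloor\log T\rfloor$) and let $I_k$ be the indices of the top $k$ values of $\sigma_i^2$. Write $\tilde\BA := \Z^*\BSigma^2\Z = B + R$ with $B := \sum_{i\in I_k}\sigma_i^2\zeta_i\zeta_i^*$ of rank at most $k$ and $R := \sum_{i\notin I_k}\sigma_i^2\zeta_i\zeta_i^*$; both are positive semidefinite, giving the two-sided bound
\[
\lambda_{\max}(B) \;\leq\; \lambda_{\max}(\tilde\BA) \;\leq\; \lambda_{\max}(B) + \lambda_{\max}(R).
\]
For the remainder I use $R \preceq \sigma_{(k+1)}^2\,\Z^*\Z$ together with the Bai--Yin upper edge $\lambda_{\max}(\Z^*\Z)\leq S(1+\sqrt{y})^2(1+o_p(1))$ (available since $Z_{t,s}$ has all moments), so $\lambda_{\max}(R) \leq \sigma_{(k+1)}^2\cdot S(1+\sqrt{y})^2(1+o_p(1))$. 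Standard order-statistic asymptotics for i.i.d.\ regularly varying $\sigma_i^2$ with tail index $\alpha/2$ give $\sigma_{(k+1)}^2/a_T = O_p(k^{-2/\alpha})$, hence $\lambda_{\max}(R) = o_p(S\,a_T)$.

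For the rank-$k$ piece $B$ I use the elementary identity that the nonzero eigenvalues of $\sum_{i\in I_k}\alpha_i u_i u_i^*$ agree with those of the $k\times k$ symmetric matrix $D^{1/2}G D^{1/2}$, where $\alpha_i := \sigma_i^2\|\zeta_i\|^2$, $u_i := \zeta_i/\|\zeta_i\|$, $D:=\diag(\alpha_i)_{i\in I_k}$, and $G_{ij}:=\langle u_i,u_j\rangle$. Markov's inequality with high-moment control of $Z_{t,s}$, plus a union bound, yields
\[
\max_{i\leq T}\bigl|\|\zeta_i\|^2/S - 1\bigr| = o_p(1),\qquad \max_{\substack{i\neq j\\ i,j\in I_k}}|\langle\zeta_i,\zeta_j\rangle|\leq\sqrt{S\log T}\,(1+o_p(1)),
\]
whence $\|G - \Id_k\|_{\mathrm{op}} \leq k\sqrt{\log T/S}\,(1+o_p(1)) = o_p(1)$ for $k=\lfloor\log T\rfloor$. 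Weyl's inequality then delivers $\lambda_{\max}(B) = \max_{i\in I_k}\alpha_i\,(1+o_p(1)) = S\,\sigma_{(1)}^2\,(1+o_p(1))$. Finally, the Fisher--Tippett--Gnedenko theorem applied to the i.i.d.\ sequence $\sigma_i^2$ with tail index $\alpha/2$ under the normalization $1-H(a_T)=1/T$ gives $\sigma_{(1)}^2/a_T \underset{d}{\to} \xi$ with $\Prob(\xi\leq x) = \exp(-x^{-\alpha/2})$. Combining the sandwich with this extreme-value limit and dividing by $Sa_T$ yields the theorem.

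The main obstacle is the joint tuning of $k$: it must grow fast enough that $\sigma_{(k+1)}^2/a_T \to 0$ but slowly enough that $k\sqrt{\log T/S}\to 0$, and all concentration estimates must hold uniformly in the random index set $I_k$ (which is $\BSigma$-measurable and hence independent of $\Z$, allowing us to condition on $I_k$ and apply the union bounds). The assumption $\alpha\neq 2$ enters only through the clean Fr\'echet normalization $a_T$: at $\alpha/2=1$ the limit requires logarithmic corrections unless the tail of $\sigma^2$ is precisely Pareto.
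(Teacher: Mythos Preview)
Your argument is correct and follows a genuinely different route from the paper. The paper decomposes the $T\times T$ matrix $\BSigma\Z\Z^*\BSigma$ into its diagonal and off-diagonal parts and then bounds $\lambda_{\max}$ of the off-diagonal block by a case analysis: for $0<\alpha<2$ via the $\ell^\infty$ row-sum norm $\lVert\cdot\rVert_\infty$ together with control of $\sum_i|\sigma_i|$, and for $2<\alpha<4$ via the Hilbert--Schmidt bound $\lambda_{\max}^2\le \Tr(\cdot)^2$ together with the Law of Large Numbers for $\sum_i\sigma_i^2$. You instead pass to the $S\times S$ matrix $\Z^*\BSigma^2\Z$, peel off a rank-$k$ piece carrying the top order statistics, dominate the remainder by $\sigma_{(k+1)}^2\,\Z^*\Z$ and invoke Bai--Yin, and handle the rank-$k$ piece by a $k\times k$ Gram perturbation. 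Your approach is uniform in $\alpha\in(0,4)$ with no case split, and in fact covers $\alpha=2$ as well; the paper's exclusion of $\alpha=2$ is an artifact of its two-regime argument (neither the $\sum|\sigma_i|$ estimate nor the LLN for $\sum\sigma_i^2$ is available there). Conversely, the paper's method is more self-contained (no appeal to Bai--Yin) and yields the explicit lower-order error used in its finite-sample discussion. One small correction to your closing remark: the case $\alpha/2=1$ is not special for extreme value theory; tail index $1$ lies squarely in the Fr\'echet domain and the normalization $a_T$ defined by $1-H(a_T)=1/T$ already absorbs any slowly varying factor, so no logarithmic correction is needed.
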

Note, that since the matrix $\mathbf{\Sigma} \mathbf{Z} \mathbf{Z}^* \mathbf{\Sigma}$ is positive semi-definite, all its eigenvalues are non-negative and therefore, $\lambda_{\max}\left[\mathbf{\Sigma} \mathbf{Z} \mathbf{Z}^* \mathbf{\Sigma}\right]$ is the maximal eigenvalue of this matrix.
\begin{remark}
\label{rem:simplewayoforder}
The spectrum of the matrices $\mathbf{\Sigma} \mathbf{Z} \mathbf{Z}^* \mathbf{\Sigma}$ and $\mathbf{\Sigma}^2 \mathbf{Z} \mathbf{Z}^*$ is the same.  Therefore, Marčenko-Pastur bounds (\cite{yin_limit_1988}) yield that the order of the maximal eigenvalue does not exceed $O(\max_{1\leq i\leq T}\sigma_i^2).$ Nevertheless, it does not yield that the limiting distribution of rescaled maximal eigenvalue of $\mathbf{\Sigma} \mathbf{Z} \mathbf{Z}^* \mathbf{\Sigma}$ is Fréchet on that particular scale.
\end{remark}
\begin{remark}
\label{rem:aTorder}
The distribution of $\sigma_i^2$ is regularly varying with exponent $\frac{\alpha}{2},$ therefore $a_T = O\left(T^{2/\alpha}\right).$
\end{remark}
\begin{remark}
\label{rem:aTstudent}
If $\sigma_i$ are distributed as re-normalized Student(3), $a_T$ can be found directly as the solution of the equation
\begin{equation}
1-H_3(a_T) = \frac{
2\left(-\frac{\sqrt{a_T}}{1+a_T}+\operatorname{ArcTan}\left[\frac{1}{\sqrt{a_T}}\right]\right)}{\pi}=\frac{1}{T}.
\end{equation}
\end{remark}

We consider 2 cases: $0<\alpha<2$ and $2<\alpha<4.$
In the first case, we estimate the maximal eigenvalue with maximum of diagonal elements from the bottom and with $\lVert \mathbf{\Sigma} \mathbf{Z} \mathbf{Z}^* \mathbf{\Sigma}\rVert_{\infty} $ from the top. In the second case, we estimate the norm of the matrix formed by non-diagonal elements first. For $1\leq i \leq T$ we will denote as $\mathbf{z}_i$ the vectors, made by the rows of matrix $\mathbf{Z}$ and we will denote the scalar product of the vectors $\x$ and $\y$ as $\left<\x, \y\right>.$ The length of the vector $\x$ we will denote as $\lVert \x \rVert_2.$ For any matrix $\mathbf{M}$ we will denote the matrix, formed by its diagonal elements as $\diag\left[\mathbf{M}\right].$

The outline of the proof is the following. First of all, we prove that
\begin{equation}
    \frac{\max_{i\leq T}\left(\left(\mathbf{\Sigma} \mathbf{Z} \mathbf{Z}^* \mathbf{\Sigma}\right)_{ii}\right)}{Sa_T} \underset{d}{\to}{\xi}.
    \label{eqn:max_to_frechet}
\end{equation}
where $\xi$ is the Fréchet distribution. Afterwards, we prove that
\begin{equation}
    \frac{\left|\lambda_{\max} \left[\mathbf{\Sigma} \mathbf{Z} \mathbf{Z}^* \mathbf{\Sigma} - \diag \left(\mathbf{\Sigma} \mathbf{Z} \mathbf{Z}^* \mathbf{\Sigma}\right) \right]\right| }{S T^{2/\alpha}}\underset{d}{\to} 0.
    \label{eqn:error}
\end{equation}
Then we obtain that

\begin{equation}
\lambda_{\max}\left[\mathbf{\Sigma} \mathbf{Z} \mathbf{Z}^{*} \mathbf{\Sigma}\right] \sim \max_{i}\left(\mathbf{\Sigma} \mathbf{Z} \mathbf{Z}^* \mathbf{\Sigma}\right)_{ii},
\end{equation}
and, subsequently, the Theorem \ref{thm:max_eigenvalue} will follow. The proof of convergence \eqref{eqn:max_to_frechet} is similar for both cases $0<\alpha<2$ and $2<\alpha<4.$ In the proof of convergence \eqref{eqn:error} each case is considered separately.
\begin{lemma} Fix $x>0.$ For matrices $\mathbf{\Sigma}$ and $\mathbf{Z}$ as above
    \begin{equation}
        \lim _{T \rightarrow \infty} \mathbb{P}\left(\frac{\max_{i\leq T}\left(\left(\mathbf{\Sigma} \mathbf{Z} \mathbf{Z}^* \mathbf{\Sigma}\right)_{ii}\right)}{Sa_T} \leq x\right)=\lim _{T \rightarrow \infty} \mathbb{P}\left(\frac{\max_{i\leq T}\sigma_i^2 }{a_T} \leq x\right)
        \end{equation}
\label{lemma:diag_frechet}
\end{lemma}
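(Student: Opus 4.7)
The key observation is that the $i$-th diagonal entry factorises as
$$(\mathbf{\Sigma}\mathbf{Z}\mathbf{Z}^*\mathbf{\Sigma})_{ii} = \sigma_i^2 \lVert \mathbf{z}_i \rVert_2^2 = \sigma_i^2 \cdot S \cdot W_i, \qquad W_i := \frac{1}{S}\sum_{s=1}^S Z_{i,s}^2.$$
Hence the claim reduces to comparing $\max_{i\leq T}\sigma_i^2 W_i / a_T$ with $\max_{i\leq T} \sigma_i^2/a_T$, and the plan is to show that the multiplicative perturbation by $W_i$ is uniformly negligible, then invoke classical extreme value theory for the pure $\sigma_i^2$ maximum.

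First I would show that $\max_{i \leq T} |W_i - 1| \to 0$ in probability. The $W_i$ are i.i.d.\ with mean $1$, and since $Z_{1,1}$ possesses all moments, so does $Z_{1,1}^2 - 1$; Rosenthal's inequality then gives, for each integer $k\geq 1$, a constant $C_k$ with $\mathbb{P}(|W_1 - 1| > \epsilon) \leq C_k \epsilon^{-2k} S^{-k}$. A union bound over $i\leq T$ yields $\mathbb{P}(\max_{i\leq T} |W_i - 1| > \epsilon) \leq C_k \epsilon^{-2k} T S^{-k}$, which vanishes for any $k \geq 2$ because $T/S \to y$. Next I would perform a deterministic squeeze: on the event $E_\epsilon := \{\max_i |W_i - 1| < \epsilon\}$, letting $i^*$ denote an index attaining $\max_i \sigma_i^2$ gives
$$(1-\epsilon)\max_i \sigma_i^2 \leq \sigma_{i^*}^2 W_{i^*} \leq \max_i \sigma_i^2 W_i \leq \bigl(\max_i \sigma_i^2\bigr)\bigl(\max_i W_i\bigr) \leq (1+\epsilon)\max_i \sigma_i^2.$$

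Dividing by $a_T$ and using $\mathbb{P}(E_\epsilon) \to 1$, the c.d.f.\ of $\max_i(\mathbf{\Sigma}\mathbf{Z}\mathbf{Z}^*\mathbf{\Sigma})_{ii}/(S a_T) = \max_i \sigma_i^2 W_i / a_T$ is squeezed between those of $(1\pm\epsilon)\max_i \sigma_i^2/a_T$ up to $o(1)$. Since $1-H$ is regularly varying of index $\alpha/2$ with $a_T$ defined by $1-H(a_T)=1/T$, classical extreme value theory guarantees $\max_i \sigma_i^2 / a_T \to \xi$ with c.d.f.\ $\exp(-x^{-\alpha/2})$, which is continuous on $(0,\infty)$. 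Sending $\epsilon\downarrow 0$ and appealing to continuity of the Fréchet c.d.f.\ yields equality of the two limits stated in the lemma.

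The main obstacle is calibrating the concentration rate for $W_i$ against the union bound over $T$ indices: the hypothesis that $Z_{1,1}$ has all moments is precisely what is needed to beat the factor $T \asymp S$ in the union bound, and it is the only place in the argument where this strong moment assumption (as opposed to, say, a finite fourth moment) is used. Once $\max_i|W_i-1|\to 0$ in probability is in hand, the rest is a routine squeeze followed by the standard Fréchet limit for i.i.d.\ regularly varying maxima.
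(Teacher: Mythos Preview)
Your proof is correct and follows essentially the same route as the paper: factor $(\mathbf{\Sigma}\mathbf{Z}\mathbf{Z}^*\mathbf{\Sigma})_{ii}=\sigma_i^2\lVert\mathbf{z}_i\rVert_2^2$, establish the uniform concentration $\max_i\bigl|\lVert\mathbf{z}_i\rVert_2^2/S-1\bigr|\to 0$ in probability via a high-moment Markov bound plus union bound (the paper packages this as an appendix corollary), and then squeeze between $(1\pm\varepsilon)\max_i\sigma_i^2/a_T$, appealing to the continuity of the Fr\'echet limit to send $\varepsilon\downarrow 0$. Your remark that ``all moments'' are needed is a slight overstatement---inspection of your own bound shows $k=2$ (hence an eighth moment of $Z_{1,1}$) already suffices---but this does not affect correctness under the theorem's hypotheses.
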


\begin{proof}
Rewrite
\begin{equation}
    \frac{\left(\mathbf{\Sigma} \mathbf{Z} \mathbf{Z}^* \mathbf{\Sigma}\right)_{ii}}{S a_T} = \frac{\sigma_i^2}{a_T}\times \frac{\lVert \mathbf{z}_i\rVert_2^2}{S}.
\end{equation}
Fix $\varepsilon >0 .$ By Corollary \ref{col:a_diag}
    \begin{equation}       \Prob\left(\left|\frac{\max\lVert \mathbf{z}_i\rVert^2}{S}-1\right|>\varepsilon\right)\underset{T\to +\infty}{\to} 0.
    \end{equation}
Therefore,
    \begin{equation}
    \Prob\left(\left|\frac{\max_{i}\left[d_ i^2\lVert\mathbf{z}_i\rVert_2^2\right]}
        {\max_i \sigma_i^2 S}-1\right|>\epsilon\right) \to 0.
    \end{equation}
We conclude, that for all $x>0, \varepsilon>0$
\begin{multline}
\lim _{T \rightarrow \infty} \mathbb{P}\left(\frac{\max_{i\leq T}\left(\left(\mathbf{\Sigma} \mathbf{Z} \mathbf{Z}^* \mathbf{\Sigma}\right)_{ii}\right)}{Sa_T}\leq x(1+\epsilon)\right) \\ \leq\lim _{T \rightarrow \infty} \mathbb{P}\left(\frac{\max_{i\leq T}\sigma_i^2 }{a_T} \leq x\right) \leq \lim _{T \rightarrow \infty} \mathbb{P}\left(\frac{\max_{i\leq T}\left(\left(\mathbf{\Sigma} \mathbf{Z} \mathbf{Z}^* \mathbf{\Sigma}\right)_{ii}\right)}{Sa_T} \leq x(1-\epsilon)\right),
\end{multline}
which leads to the statement of the Lemma.
\end{proof}

Now, combining Lemma \ref{lemma:diag_frechet} and Lemma \ref{lemma:frechet} it is enough to prove, the following
\begin{corollary}

When $\sigma_i$ are regularly varying with exponent $0<\alpha<4$
\begin{equation}
     \frac{\lambda_{\max} \left[\mathbf{\Sigma} \mathbf{Z} \mathbf{Z}^* \mathbf{\Sigma} - \diag \left(\mathbf{\Sigma} \mathbf{Z} \mathbf{Z}^* \mathbf{\Sigma}\right) \right]}{Sa_T} \underset{d}{\to} 0.
\end{equation}
\label{lemma:nondiad_eig_to0}
\end{corollary}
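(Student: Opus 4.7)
The plan is to bound the operator norm of the off-diagonal matrix
\[
\mathbf{R} := \mathbf{\Sigma}\mathbf{Z}\mathbf{Z}^*\mathbf{\Sigma} - \diag(\mathbf{\Sigma}\mathbf{Z}\mathbf{Z}^*\mathbf{\Sigma})
\]
by its Frobenius norm and then estimate the latter via a conditional second-moment calculation. Since $\mathbf{R}$ is Hermitian with $(i,j)$-entry $\sigma_i\sigma_j\langle\mathbf{z}_i,\mathbf{z}_j\rangle$ for $i\neq j$ and zero on the diagonal, the sum-of-squares-of-eigenvalues identity immediately gives
\[
\lambda_{\max}[\mathbf{R}]^2 \leq \|\mathbf{R}\|_F^2 = \sum_{i\neq j}\sigma_i^2\sigma_j^2\langle\mathbf{z}_i,\mathbf{z}_j\rangle^2.
\]
Using the independence of $\mathbf{\Sigma}$ from $\mathbf{Z}$ together with $\mathbb{E}\langle\mathbf{z}_i,\mathbf{z}_j\rangle^2 = S$ for $i\neq j$, I would obtain $\mathbb{E}[\|\mathbf{R}\|_F^2\mid \mathbf{\Sigma}] = S\sum_{i\neq j}\sigma_i^2\sigma_j^2 \leq S(\sum_i\sigma_i^2)^2$. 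Conditional Markov then yields
\[
\Prob\left(\|\mathbf{R}\|_F > \varepsilon S a_T \mid \mathbf{\Sigma}\right) \leq \frac{(\sum_i\sigma_i^2)^2}{\varepsilon^2 S a_T^2},
\]
so the proof reduces to showing that $(\sum_i\sigma_i^2)^2/(Sa_T^2)\to 0$ in probability; dominated convergence applied to the clipped bound $\min\{1,(\sum_i\sigma_i^2)^2/(\varepsilon^2 S a_T^2)\}$ then upgrades this to the unconditional convergence required.

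This remaining scaling claim splits cleanly along the two regimes announced in the paper. When $2<\alpha<4$, the squared volatility $\sigma_i^2$ has tail index $\alpha/2>1$, so $\mathbb{E}[\sigma^2]<\infty$ and the law of large numbers delivers $\sum_i\sigma_i^2 = \Theta_\Prob(T)$; combined with $a_T = \Theta(T^{2/\alpha})$ from Remark~\ref{rem:aTorder} and $T/S\to y>0$, the ratio becomes $\Theta_\Prob(T^{1-4/\alpha})$, which vanishes because $\alpha<4$. When $0<\alpha<2$, the $\sigma_i^2$ instead lie in the domain of attraction of a positive $(\alpha/2)$-stable law; the defining equation $1-H(a_T)=1/T$ is exactly the Gnedenko normalization for this case, so $\sum_i\sigma_i^2/a_T$ converges in distribution to a non-degenerate positive random variable and the ratio is $O_\Prob(1/S)\to 0$.

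The main obstacle will be confirming the stable-law input in the heavy-tailed regime $0<\alpha<2$, where the unconditional second moment $\mathbb{E}\|\mathbf{R}\|_F^2$ is genuinely infinite and one really must condition on $\mathbf{\Sigma}$ before applying Markov, then invoke the Gnedenko-Feller-Karamata theory of sums of regularly varying positive random variables to identify the correct scale. This same critical normalization also clarifies the exclusion $\alpha\neq 2$, since at $\alpha/2=1$ the sum $\sum_i\sigma_i^2$ lies in the borderline regime requiring a slowly varying correction to $a_T$. The remaining ingredients—the Hermitian Frobenius inequality, the computation $\mathbb{E}\langle\mathbf{z}_i,\mathbf{z}_j\rangle^2 = S$, and the Markov tail bound—are entirely routine.
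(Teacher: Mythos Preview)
Your argument is correct and, in several respects, cleaner than the paper's. The paper treats the two ranges of $\alpha$ by genuinely different devices: for $2<\alpha<4$ it uses essentially your Frobenius/trace inequality $\lambda_{\max}[\mathbf{R}]^2\le \Tr\mathbf{R}^2$, but controls $\Tr\mathbf{R}^2$ pointwise by $(\sum_i\sigma_i^2)^2\max_{i\neq j}\langle\mathbf{z}_i,\mathbf{z}_j\rangle^2$ and then invokes Corollary~\ref{col:a_nondiag} for the inner-product maximum; for $0<\alpha<2$ it abandons the Frobenius route entirely in favour of the row-sum norm bound $\lambda_{\max}[\mathbf{R}]\le\lVert\mathbf{R}\rVert_\infty\le \max_i|\sigma_i|\cdot\sum_j|\sigma_j|\cdot\max_{i\neq j}|\langle\mathbf{z}_i,\mathbf{z}_j\rangle|$, which in turn forces a further subcase split ($1<\alpha<2$ via the law of large numbers for $\sum|\sigma_i|$, $0<\alpha<1$ via a Laplace-transform computation). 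Your conditional-Markov approach, integrating out $\mathbf{Z}$ to get the exact factor $S$ and then appealing to the $(\alpha/2)$-stable limit theorem for $\sum_i\sigma_i^2/a_T$ when $\alpha<2$, handles both regimes with a single inequality and a single heavy-tailed input, and it sidesteps the need for Lemma~\ref{lemma:inftynorm} and Corollary~\ref{col:a_nondiag} altogether. The paper's pointwise bounds do retain one advantage: they yield the explicit almost-sure rate information used in Remark~\ref{rem:errorbottom} to quantify the finite-$T$ error, which your second-moment bound does not directly provide.
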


We prove by cases $0<\alpha<2$ and $2<\alpha<4.$

\subsection{Proof of Lemma \ref{lemma:nondiad_eig_to0} for $ 0<\alpha<2$}
By Lemma \ref{lemma:inftynorm},
\begin{multline}
\lambda_{\max} \left[\mathbf{\Sigma} \mathbf{Z} \mathbf{Z}^* \mathbf{\Sigma} - \diag \left(\mathbf{\Sigma} \mathbf{Z} \mathbf{Z}^* \mathbf{\Sigma}\right) \right] \leq
\left\lVert \mathbf{\Sigma} \mathbf{Z} \mathbf{Z}^* \mathbf{\Sigma} - \diag \left(\mathbf{\Sigma} \mathbf{Z} \mathbf{Z}^* \mathbf{\Sigma}\right)\right\rVert_\infty \\ \leq \max_{1\leq i \leq T} \left[ |\sigma_i| \sum_{j \neq i} |\sigma_j| |\left< \mathbf{z}_i, \mathbf{z}_j\right>|\right] \leq \max_{1\leq i \leq T} |\sigma_i| \sum_{j=1}^{T}|\sigma_j| \max_{i\neq j} |\left< \mathbf{z}_i, \mathbf{z}_j\right>|.
\end{multline}
Therefore, we need to prove, that
\begin{equation}
    \frac{\max_{1\leq i \leq T} |\sigma_i| \sum_{j=1}^{T}|\sigma_j| \times \max_{i\neq j} |\left< \mathbf{z}_i, \mathbf{z}_j\right>|}{ST^{2/\alpha}} \underset{T\to +\infty}{\to}0.
\end{equation}
Note, that similarly to Lemma \ref{lemma:frechet},
\begin{equation}
    \frac{\max_{1\leq i \leq T} |\sigma_i|}{T^{1/\alpha}} = O(1).
\end{equation}
By Corollary \ref{col:a_nondiag} for all $\epsilon>0$
\begin{equation}
    \frac{\max_{i\neq j} |\left< \mathbf{z}_i, \mathbf{z}_j\right>|}{S^{1/2+\varepsilon}} \to 0.
\end{equation}
It is enough to show, that for some $\epsilon>0.$
\begin{equation}
    \frac{\sum_{1\leq i \leq T} |\sigma_i|}{T^{1/\alpha+1/2 -\varepsilon}} = O(1).
\end{equation}
If $1<\alpha<2$ by the Law of Large Numbers
\begin{equation}  \frac{\sum_{1\leq i \leq T} |\sigma_i|}{T} \underset{d}{\to} \Exp|\sigma_i|,
\end{equation}
therefore for $\epsilon<\frac{1}{\alpha}-\frac{1}{2}$
\begin{equation}
\frac{\sum_{1\leq i \leq T} |\sigma_i|}{T^{1/\alpha+1/2 -\varepsilon}} \underset{d}{\to} 0.
\end{equation}

By \cite{bingham_regular_1989}[Corollary 8.1.7]  for $0<\alpha<1$ and $\lambda>0$
\begin{equation}
    g_{|\sigma_i|}(\lambda) :=\Exp[e^{-\lambda |\sigma_i|}]= 1 - C \lambda^{\alpha} + o\left(\lambda^{\alpha}\right),
\end{equation}
therefore
\begin{equation}  \Exp\left[\exp\left(-\lambda \frac{\sum_{1\leq i \leq T} |\sigma_i|}{T^{1/\alpha+1/2 -\varepsilon}}\right)\right]= \left(1 - C \frac{\lambda^{\alpha}}{T^{1+\alpha\left(1/2 -\varepsilon\right)}} + o\left(\frac{\lambda^{\alpha}}{T^{1+\alpha\left(1/2 -\varepsilon\right)}}\right)\right)^T \underset{T\to +\infty}{\to} 1,
\end{equation}
which yields
\begin{equation}
    \frac{\sum_{1\leq i \leq T} |\sigma_i|}{T^{1/\alpha+1/2 -\varepsilon}} \underset{d}{\to} 0.
\end{equation}

\subsection{Proof of Lemma \ref{lemma:nondiad_eig_to0} for $2<\alpha<4$}

\begin{proof}
Note, that it is enough to prove that
\begin{equation}
    \frac{\Tr\left[\mathbf{\Sigma} \mathbf{Z} \mathbf{Z}^* \mathbf{\Sigma} - \diag \left(\mathbf{\Sigma} \mathbf{Z} \mathbf{Z}^* \mathbf{\Sigma}\right) \right]^2}{S^2 T^{4/\alpha}} \underset{d}{\to } 0.
\end{equation}
We can expand
\begin{equation}
    \left[\mathbf{\Sigma} \mathbf{Z} \mathbf{Z}^* \mathbf{\Sigma} - \diag \left(\mathbf{\Sigma} \mathbf{Z} \mathbf{Z}^* \mathbf{\Sigma}\right) \right]_{ij} =
\begin{cases}
    \sigma_i \sigma_j \left< \mathbf{z}_i, \mathbf{z}_j\right> & \text{if } i\neq j \\
    0 &\text{if } i=j.
\end{cases}
\end{equation}
Therefore,
\begin{equation}
    \left[\mathbf{\Sigma} \mathbf{Z} \mathbf{Z}^* \mathbf{\Sigma} - \diag \left(\mathbf{\Sigma} \mathbf{Z} \mathbf{Z}^* \mathbf{\Sigma}\right) \right]^2 =: \mathbf{T},
\end{equation}
    where $\mathbf{T}$ is such that
    \begin{equation}
        \mathbf{T}_{ij}:=
        \begin{cases}
            \sigma_i^2 \sum_{k:k\neq i} d^2_k \left< \mathbf{z}_i, \mathbf{z}_k\right>^2 &\textit{if }i=j\\
            \sigma_i \sigma_j \sum_{\substack{k\neq i\\ k\neq j} } \sigma_k^2 \left< \mathbf{z}_i, \mathbf{z}_k\right>\left< \mathbf{z}_j, \mathbf{z}_k\right>  &\textit{if } i\neq j.
        \end{cases}
    \end{equation}
    Notice, that
    \begin{equation}
        \Tr \mathbf{T} \leq \left(\sum_{i=1}^T \sigma_k^2\right)^2 \max_{i\neq j}\left<\mathbf{z}_i, \mathbf{z}_j\right>^2,
    \end{equation}
    Therefore, combination of Law of Large Numbers for $\sum_{k=1}^T\sigma_k^2$ and Corollary \ref{col:a_nondiag} yields that for all $\varepsilon>0$ $$\frac{\lambda_{\max}\left[\mathbf{T}\right]}{S^{1+\varepsilon}T^2} \underset{d}{\to } 0. $$
    The convergence above for $\varepsilon<\frac{4}{\alpha}-1$ yields the statement of the Lemma.
    \end{proof}
    \begin{remark}
    \label{rem:errorbottom}
    One can see that
    \begin{equation}
        \mathbf{T}= \sum_{j} \sigma_k^2 \mathbf{R}^{(k)},
    \end{equation}
    where
    \begin{equation}
        \mathbf{R}^{(k)}_{ij}:=
        \begin{cases}
        d_{i}^2 \left< \mathbf{z}_k, \mathbf{z}_i\right>^2 &\textit{if } i= j \neq k, \\
        d_{i} d_{j} \left< \mathbf{z}_k, \mathbf{z}_i\right>\left< \mathbf{z}_k, \mathbf{z}_j\right>
        &\textit{if }i\neq j\neq k\\
        0 &\textit{if }i=k \textit{ or }j=k
        \end{cases}
    \end{equation}
    Notice, that
    \begin{equation}
        \mathbf{R}^{(k)} = \mathbf{v}_k \mathbf{v}_k^T,
    \end{equation}
    where $\mathbf{v}_k$ is a $T$-dimensional column vector with
    \begin{equation}
        \left(\mathbf{v}_k\right)_i :=
        \begin{cases}
            \sigma_i \left< \mathbf{z}_k, \mathbf{z}_i\right> &\textit{if }i\neq k \\
            0  &\textit{if }i=k.
        \end{cases}
    \end{equation}
    Independently of $\mathbf{\Sigma}, $ we can choose $\frac{T}{2}$ random vectors $\mathbf{z}_i $ that are lying in the same $T$-dimensional semi-space and thus their pairwise scalar products are positive, and denote their set as of indices as $I_0.$  Denote the set of indices $i$ such that $\sigma_i>0$ as $I_1,$ and take $I =I_0\cap I_1.$ Suppose for the vector $\mathbf{w}, $ for $i\in I$ $w_i = 1$ and $w_i = 0$ otherwise.
    \begin{equation}
        \mathbf{w}^T \mathbf{T} \mathbf{w} \geq \sum_{i\in I} \sigma_i^2 \left(\sum_{k\in I, k\neq i} \sigma_k \left<\mathbf{z}_k, \mathbf{z}_i\right>\right)^2.
    \end{equation}
        Since $\left<\mathbf{z}_i, \mathbf{z}_k\right> \sim \sqrt{S}, $
    \begin{equation}
         \mathbf{w}^T \mathbf{T} \mathbf{w} \sim \frac{1}{4^3} T^3 S,
    \end{equation}
        thus, it is the exact order of the error can be estimated as
    $$
    \lambda_{max}[\mathbf{T}] \gtrsim \frac{1}{16} T^2 S
    $$
    and
    $$\lambda_{\max}\left[\mathbf{\Sigma} \mathbf{Z} \mathbf{Z}^* \mathbf{\Sigma} - \diag \left(\mathbf{\Sigma} \mathbf{Z} \mathbf{Z}^* \mathbf{\Sigma}\right)\right]\gtrsim \frac{1}{4} T S^{1/2}.
    $$
    \end{remark}

    Furthermore, we show the significance of the error and provide the numerical simulations comparing the largest eigenvalues of the submatrices of the cleared data and the largest eigenvalues in the EVSCE. Figure \ref{fig:modelvsfrechet} shows that on the scale of dimentionality of our data, the error is still significant. When $T= \frac{T_{data}}{50},$ the Remark \ref{rem:aTstudent} yields, that $a_T\approx 36.281.$  This way, $S=485$ and $T = 512.$
    We can see, that by Remark \ref{rem:errorbottom}
    \begin{equation}
        \frac{|\lambda_{\max} \left[\mathbf{\Sigma} \mathbf{Z} \mathbf{Z}^* \mathbf{\Sigma} - \diag \left(\mathbf{\Sigma} \mathbf{Z} \mathbf{Z}^* \mathbf{\Sigma}\right) \right]|}{S a_T} \gtrsim \frac{\frac{1}{4} T S^{1/2}}{Sa_T}= \frac{\frac{1}{4} T }{\sqrt{S} a_T}\approx 0.16
    \end{equation}
    This error plays important role when $\max_{i\leq T}\left(\mathbf{\Sigma} \mathbf{Z} \mathbf{Z}^* \mathbf{\Sigma}\right)_{ii}$ takes values that are close to 0.
    \begin{figure}[H]
    \centering
    \includegraphics[width=0.6\textwidth]{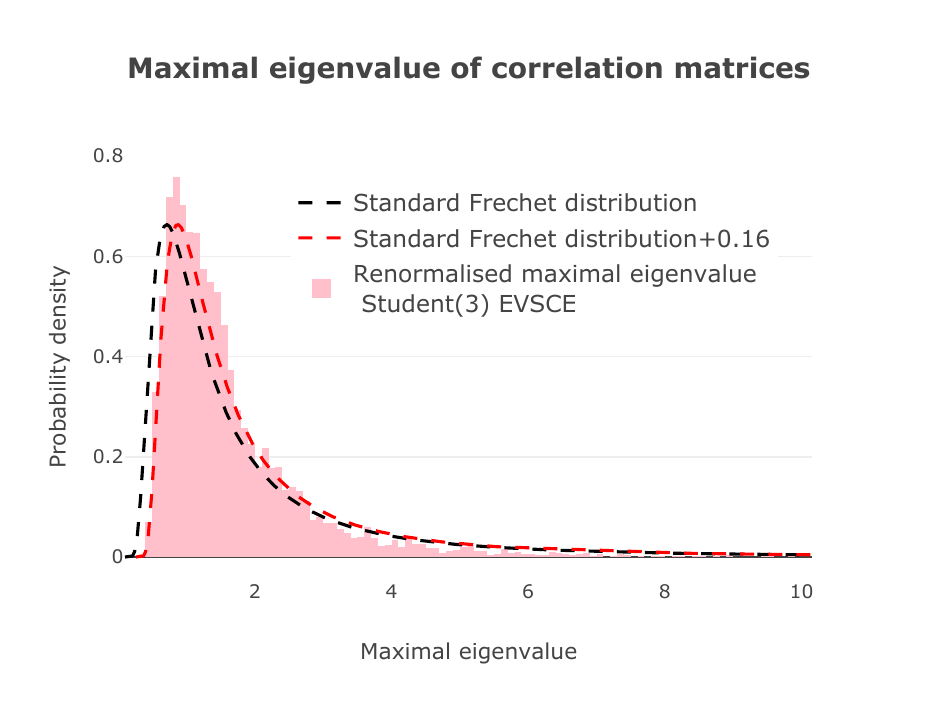}
        \caption{Histogram of the maximal eigenvalue in a simulated EVSCE when $\sigma_i$ has Student(3) distribution. The dimensions of the matrix simulated matrix are $S=485, T=512.$ Number of simulations is $N=5000.$}
        \label{fig:modelvsfrechet}
    \end{figure}
    It explains why the histogram of numerical simulations does not completely match the limiting distribution. Figure \ref{fig:modelvsfrechet} shows that for the matrices modelled with these dimensions the distribution $F_{\lambda_{\max}}$ of the renormalised largest eigenvalue is
    \begin{equation}
        F_{\xi+0.64}(x) < F_{\lambda_{\max}} (x) < F_{\xi+0.16}(x),
    \end{equation}
     where $\xi$ has Fréchet distribution.


\section{Comparison of EVSCE and historical data}
\label{sec:data}
We conduct research on S\&P500  15-minute intervals of stock-returns from January 2020 to October 2022.  Data were obtained from \textit{polygon.io}.

\subsection{Data preparation: removing the “market mode” and re-normalisation}
The return of the stock $s$ over the time interval $t$ is calculated in the following way:

\begin{equation}
    \mathbf{x}_{ts}^\prime := \log \left(\frac{\textit{Close price}_s(t)}{\textit{Open price}_s(t)}\right),
\end{equation}
where $\textit{Close price}_s(t)$ and ${\textit{Open price}_s(t)}$ denote Close and Open price of the stock $s$ on the time interval $t$ respectively. If there were no sales of the stock $s$ on the time interval $t$ and, consequently, Open and Close prices can not be determined, we assume the value of $\mathbf{x}_{ts}^\prime$ to be equal to $0$. Denote the matrix $\mathbf{X}^{\prime}:= \left(\mathbf{x}_{ts}^\prime\right)_{\substack{t\leq T\\ s\leq S}}.$ Below we describe the procedure of re-normalisation and “market mode” removal.

The first step is to obtain the matrix $\mathbf{X}_{data},$ is from the matrix $\mathbf{X}^{\prime}$ with the procedure of re-normalization described below.

For the $T \times S$ matrix $\mathbf{Y}$ the procedure of re-normalization conducted the following way:
\begin{itemize}
    \item For each entry of the matrix $\mathbf{Y}$ subtract the empirical mean of the entries of its column.
    \item Divide each entry of the matrix you got in the previous step by the empirical standard deviation of the entries of its column.
    \end{itemize}
This way, the re-normalized matrix will have on the intersection of the row $t$ and column $s$ the number
\begin{equation}
    y_{ts}^\prime := \frac{y_{ts} - \overline{y}_s}{\sigma_s},
\end{equation}
where
\begin{equation}
    \overline{y}_s:= \frac{1}{T} \sum_{t=1}^{T} y_{ts}
\end{equation}
 and
\begin{equation}
    \sigma_s := \sqrt{\frac{1}{T-1}\sum_{t=1}^{T}\left(y_{ts} -  \overline{y}_s\right)^2}.
\end{equation}

The “market mode” causes the overwhelming majority of entries of the matrix $\frac{\mathbf{X}_{data}^* \mathbf{X}_{data}}{T}$ to be positive and drives its maximal eigenvalue. It also causes the maximal eigenvalue of $\frac{\mathbf{X}_{data}^* \mathbf{X}_{data}}{T}$ to be significantly larger then the typical maximal eigenvalue EVSCE with Student(3)-distributed $\sigma_t$'s.

For the second step, we apply standard PCA to separate the “market mode” of $\mathbf{X}_{data}$ and re-normalize the result.  To “clear” $T\times S$ matrix $\mathbf{Y}$ using the $S$-component vector $\mathbf{y}_0,$ such that $\lVert \mathbf{y}_0\rVert=1,$ we replace each row $\mathbf{y}_t$ of the matrix $\mathbf{Y}$ with
\begin{equation}
    \mathbf{y}_t^\prime : = \mathbf{y}_t - \left< \mathbf{y}_t,  \mathbf{y}_0\right>  \mathbf{y}_0.
\end{equation}
To separate the “market mode” we apply the procedure of “clearing” to the matrix $\mathbf{X}_{data}$ using the vector $\mathbf{x}_{\max},$ where $\mathbf{x}_{\max},$ is the eigenvector of $\frac{\mathbf{X}_{data}^* \mathbf{X}_{data}}{T}$ corresponding the maximal eigenvalue $\lambda_{\max}$. We obtain the matrix $\mathbf{X}_{cl}^\prime$, and after applying re-normalization procedure to $\mathbf{X}_{cl}^\prime$ we obtain the matrix $\mathbf{X}_{cl}.$

Note, that eigenvalues of $\frac{\mathbf{X}_{data}^* \mathbf{X}_{data}}{T}$ and $\frac{\left(\mathbf{X}^\prime_{cl}\right)^* \mathbf{X}_{cl}^\prime}{T},$ apart from $\lambda_{\max},$ are matching, and $\frac{\left(\mathbf{X}^\prime_{cl}\right)^* \mathbf{X}_{cl}^\prime}{T},$ has $0$ instead of $\lambda_{\max}.$ Nevertheless, after re-normalization, eigenvalues can shift depending on the sample variances of columns of the matrix $\mathbf{X}_{cl}^\prime.$ Further, we compare the spectrum of the matrix $\frac{\X_{cl}^*\X_{cl}}{T}$ with the spectrum of the matrix $\frac{\X^* \X}{T},$ where the matrix $\X$ is obtained from Elliptic Volatility Model and has the same size as $\X_{cl}.$

\subsection{Data analytics: comparing EVSCE and market data}
First we observe that data suggests that the distribution of empirical standard deviations of the rows of “cleared” returns is heavy-tailed (see Figure \ref{fig:tail_of_std}) and the tail parameter is approximately 3.
\begin{figure}[H]
    \centering
    \includegraphics[width=0.7\textwidth]{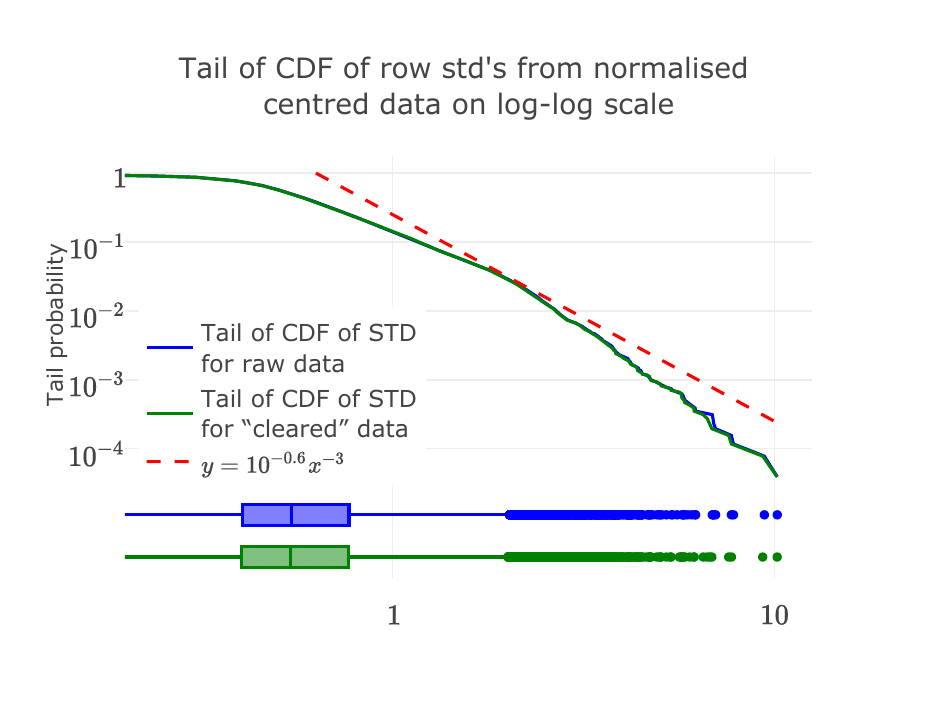}
    \caption{Cubic law for standard deviations of return vectors at fixed time. Data taken from polygon.io}
    \label{fig:tail_of_std}
\end{figure}

The Figure \ref{fig:cl_data_our} (top left) shows that the histogram of spectrum of the matrix $\frac{\X_{cl}^* \X_{cl}}{T}$ is well approximated by the limiting spectrum of EVSCE with $\sigma_t$'s i.i.d. as Student(3). Figure \ref{fig:cl_data_our} (top right) shows that it is not well approximated by the spectrum of EVSCE with Normally distributed volatility. In EVSCE the heaviness of the tail of the limiting spectrum depends on the heaviness of the distribution of the volatility as shown in equation (11) of \cite{Biroli2007student} and Corollary \ref{c:tail3}. Figure \ref{fig:cl_data_our} (bottom) shows the histograms of eigenvalues for the data and the simulation EVSCE with $\sigma_t$'s i.i.d. as Student(3) where the entries of $\mathbf{X}$ are randomly shuffled. The Marchenko-Pastur law is plotted as well, and we see that the shuffled data approximates the Marchenko-Pastur law well. This is a control to verify that the dependence and correlation structures in the two data sets cause the heavy tails in the corresponding spectral measures.

\begin{figure}[H]
    \centering
    \includegraphics[width=0.45\textwidth]{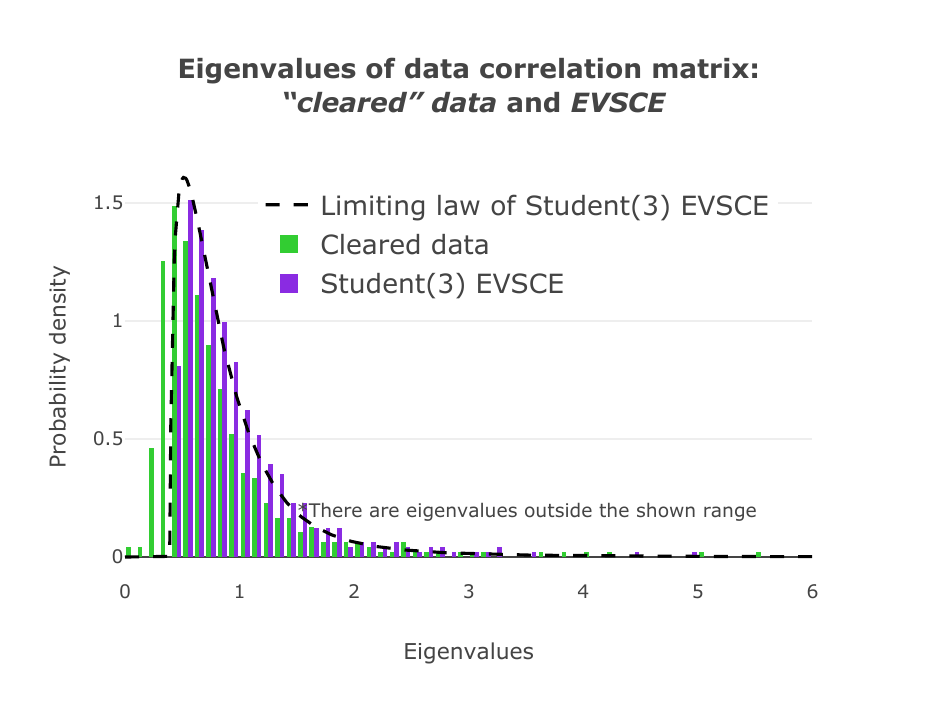}
    \includegraphics[width=0.45\textwidth]{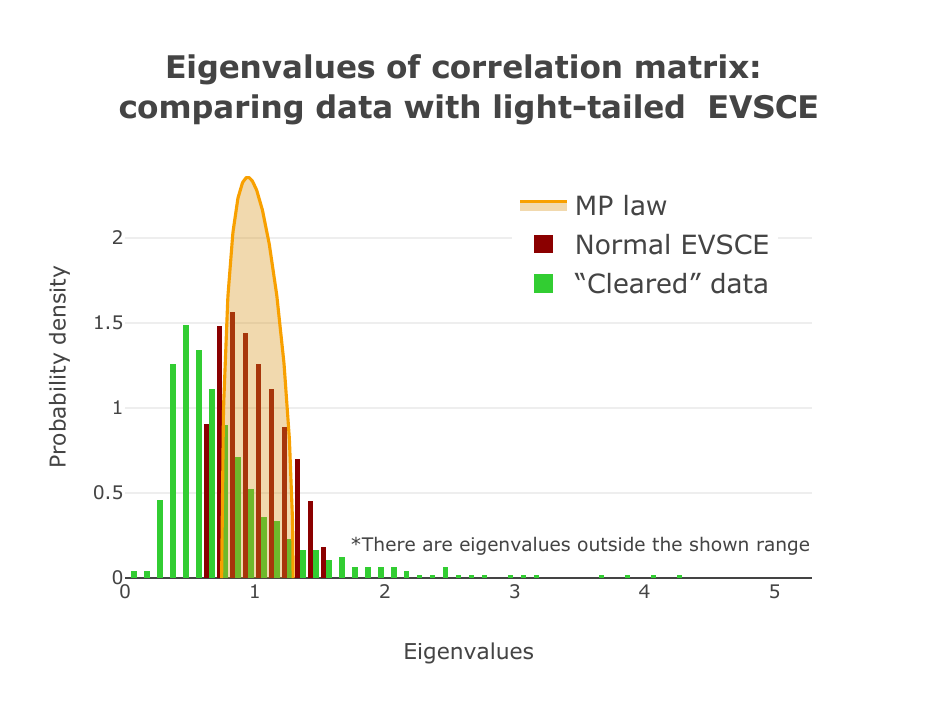}
    \includegraphics[width=0.6\textwidth]{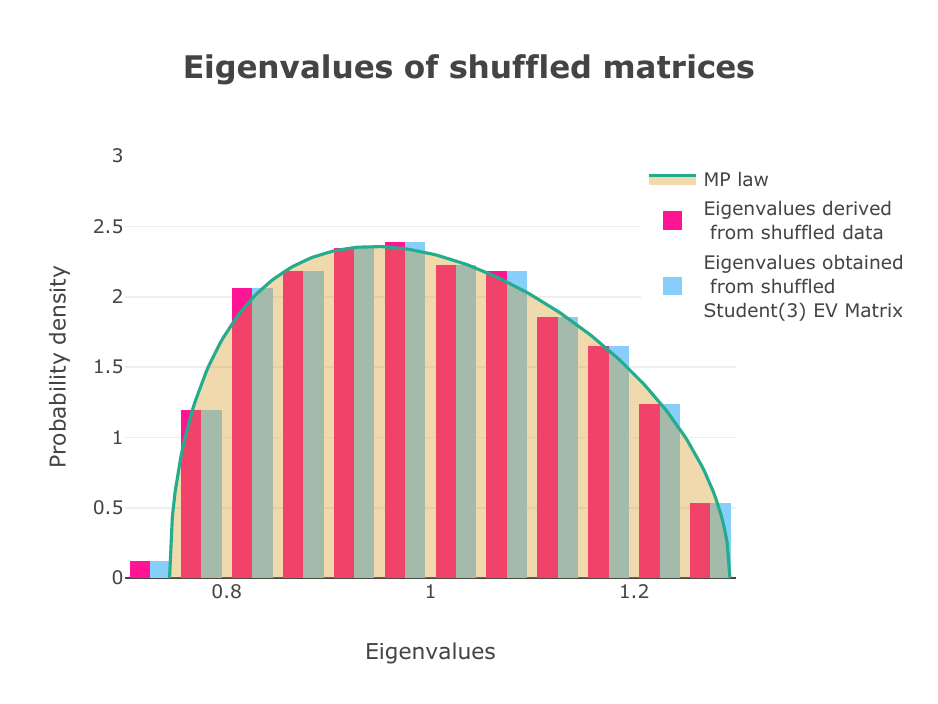}
    \caption{(Top left) Histograms of spectrum of simulated Student(3) EVSCE and matrix $\frac{\X_{cl}^*\X_{cl}}{T},$ and the limit obtained in Theorem \ref{t:density}. (Top right) Comparison of the Spectrum of $\frac{\X_{cl}^*\X_{cl}}{T},$ randomly generated EVSCE with Normally distributed $\sigma_t$'s and Marcenko-Pastur law. (Bottom) Comparison of spectrum of covariance matrix of shuffled data and similarly shuffled EVM to Marchenko-Pastur law. \textit{Data from polygon.io.}}
    \label{fig:cl_data_our}
\end{figure}

To study the distribution of the maximum eigenvalue of the data, we divide the original data into 50 equal parts, and apply the same normalisation and clearing procedure to each part as we did to the whole data set in Section \ref{sec:data}. Figure \ref{fig:m_eig_data} shows that maximal eigenvalues of the EVSCE model have the same order of magnitude as the maximal eigenvalues of sample covariance matrices obtained from re-normalised and cleared parts of $\X_{cl}^\prime.$ Nevertheless, their distribution is not a complete match.
\begin{figure}[H]
    \centering
    \includegraphics[width=0.6\textwidth]{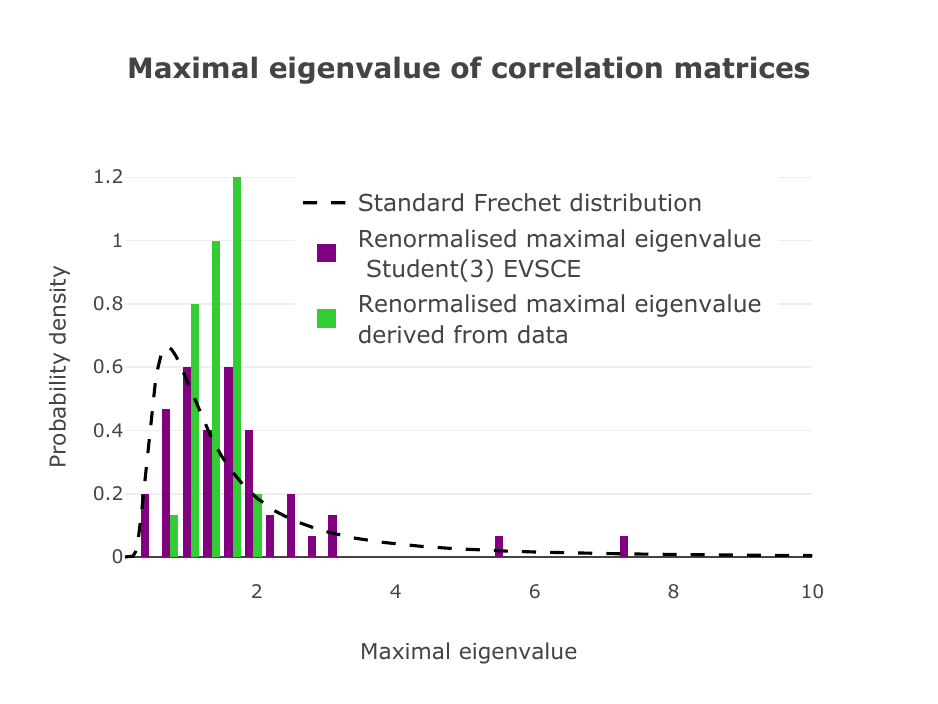}
    \caption{Histogram of re-normalised maximal eigenvalues: matrix $\X_{cl}^\prime$ were separated into 50 equal parts each part was ``cleared" (in green)(data from polygon.io). Histogram of 50 re-normalised maximal eigenvalues of Student(3) EVSCE of the same size (in purple).}
    \label{fig:m_eig_data}
\end{figure}

While some spectral properties of the stock returns covariance matrix may be due to correlations of stocks, e.g. from companies in the same economic sector, their dependence structure is also important. The EVSCE has uncorrelated but dependent random variables. Scatter plots where the returns of one stock are plotted against the returns of another can be used to show that extreme returns tend to happen simultaneously (see, e.g., \cite{aussenegg2008simple} ). Heavy tailed Elliptic Volatility random variables, that is random variables of the form of columns of the EVM, can account for such “spillovers,” that is log-returns of 2 different stocks can be “jointly heavy” (compare Figure \ref{fig:tail_dep} top left derived from data to top right which shows two EV random variables). Data suggests that the values of “spillover” pairs of returns may not be easily explained solely through the combination of “heavy-tailness” of each return and the correlation coefficient (compare Figure \ref{fig:tail_dep} top left derived from data to bottom left and right). The scatterplots from data and the EV random variables appear to have a convex 2D shape (top plots), while the ones derived from independent or correlated variables appear to have a concave shape.

\begin{figure}
    \centering
    \includegraphics[width=0.8\textwidth]{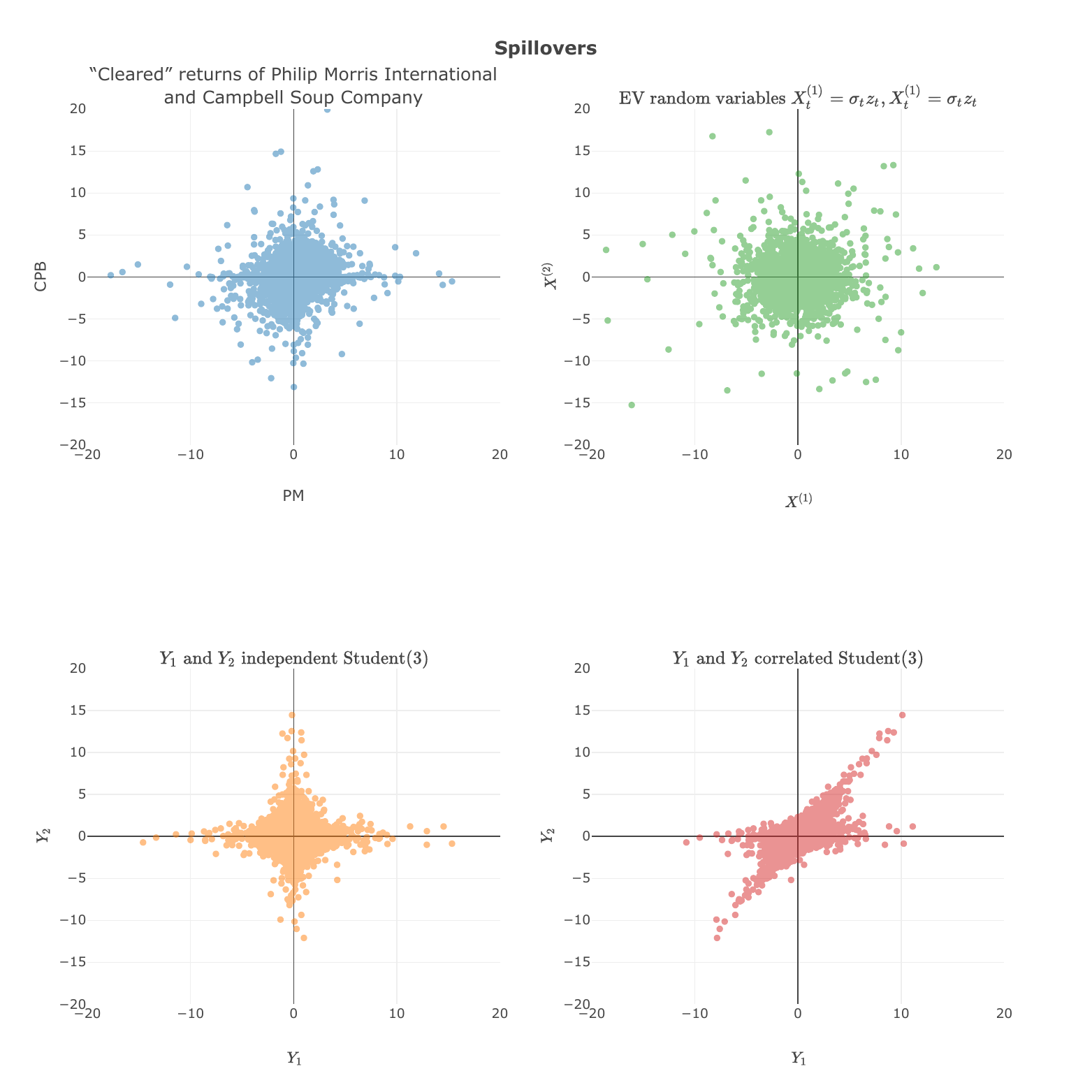}
    \caption{Joint heaviness of log-returns of two stocks may not follow solely from the heaviness of log-returns of each stock and their correlation coefficient. Top left demonstrates the scatter-plot of “cleared” log-returns of two different stocks (data from polygon.io). Top right demonstrates scatter plot of $X^{(1)}=\sigma \xi_1, X^{(2)}=\sigma \xi_2,$ where $\sigma$ is re-normalised Student(3) r.v., $\xi_1$ and $\xi_2$ are independent standard normal r.v.'s independent of $\sigma.$ Bottom left shows the scatter-plot of population derived from 2 independent re-normalised Student(3) random variables. Bottom right demonstrates the scatter-plot of $Y_1 = a_1, Y_2 = \frac{a_1+a_2}{\sqrt{2}}$, where $a_1$ and $a_2$ are independent re-normalised Student(3).}
    \label{fig:tail_dep}
\end{figure}

\begin{subappendices}
\section{Appendix}
\begin{lemma} For $N\times N$ Hermitian matrix $\mathbf{H}$ denote $$\left\lVert \mathbf{H}\right\rVert _{\infty}:= \max_{1\leq i\leq N} \sum_{j=1}^N \left|\mathbf{H}_{i, j}\right|.$$ Then
\begin{equation*}
       \lambda_{\max}\left[\mathbf{H}\right] \leq \left\lVert \mathbf{H}\right\rVert _{\infty}.
\end{equation*}
\label{lemma:inftynorm}
\begin{proof}
    For $N$-dimensional  column vector $\x$ we notice, that
\begin{multline*}
    \x^* \mathbf{H}\x = \sum_{i=1}^N \sum_{j=1}^N  x_i \mathbf{H}_{i, j} x_j \leq \sum_{i=1}^N \sum_{j=1}^N \mathbf|{H}_{i, j}| \frac{|x_j|^2+|x_i|^2}{2} \\ =\frac{ \sum_{1\leq i \leq N} |x_i|^2 \sum_{j=1}^N \mathbf|{H}_{i, j}| + \sum_{1\leq j \leq N} |x_j|^2 \sum_{i=1}^N \mathbf|{H}_{i, j}|}{2} \leq  \left\lVert \mathbf{H}\right\rVert _{\infty} \lVert \x\rVert _2^2
\end{multline*}

Therefore, for all $N$-dimensional  column vectors $\x$
\begin{equation*}
    \frac{\x^* \mathbf{H}\x}{\lVert \x\rVert_2^2} \leq \lVert \mathbf{H}\rVert _{\infty},
\end{equation*}
which yields the statement of the Lemma.
\end{proof}
\end{lemma}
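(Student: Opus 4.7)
The plan is to use the variational (Rayleigh quotient) characterization of the largest eigenvalue of a Hermitian matrix and then bound the resulting quadratic form by the arithmetic--geometric mean inequality applied to pairs $|x_i \overline{x_j}|$.

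First I would recall that for a Hermitian matrix $\mathbf{H}$, the spectral theorem implies that $\mathbf{x}^* \mathbf{H} \mathbf{x}$ is real for every $\mathbf{x} \in \mathbb{C}^N$, and
\begin{equation*}
\lambda_{\max}[\mathbf{H}] \;=\; \max_{i} |\lambda_i| \;=\; \sup_{\mathbf{x} \neq 0} \frac{\bigl|\mathbf{x}^* \mathbf{H} \mathbf{x}\bigr|}{\|\mathbf{x}\|_2^2}.
\end{equation*}
Thus it suffices to show that the quadratic form satisfies $|\mathbf{x}^* \mathbf{H} \mathbf{x}| \leq \|\mathbf{H}\|_\infty \|\mathbf{x}\|_2^2$ for every $\mathbf{x} \in \mathbb{C}^N$.

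The key step is the pointwise estimate $|x_i \overline{x_j}| \leq \tfrac{1}{2}(|x_i|^2 + |x_j|^2)$, which follows from $(|x_i|-|x_j|)^2 \geq 0$. Inserting this into the double-sum expansion of $\mathbf{x}^* \mathbf{H} \mathbf{x}$ and using the triangle inequality gives
\begin{equation*}
|\mathbf{x}^* \mathbf{H} \mathbf{x}| \;\leq\; \sum_{i,j} |\mathbf{H}_{ij}| \,|x_i| \,|x_j| \;\leq\; \tfrac{1}{2}\sum_{i,j} |\mathbf{H}_{ij}|\bigl(|x_i|^2 + |x_j|^2\bigr).
\end{equation*}
Splitting the right-hand side into two sums and reindexing one of them (using that $\mathbf{H}$ is Hermitian so $|\mathbf{H}_{ij}|=|\mathbf{H}_{ji}|$), each becomes $\sum_i |x_i|^2 \sum_j |\mathbf{H}_{ij}|$. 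Bounding the inner row sum by the maximum row sum $\|\mathbf{H}\|_\infty$ and pulling it out finishes the calculation: the whole expression is at most $\|\mathbf{H}\|_\infty \sum_i |x_i|^2 = \|\mathbf{H}\|_\infty \|\mathbf{x}\|_2^2$.

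There is essentially no obstacle here; the only thing to watch is keeping the symmetrisation honest when $\mathbf{H}$ is complex Hermitian rather than real symmetric. One could alternatively invoke Gershgorin's circle theorem, which directly gives $|\lambda_i| \leq \max_i \sum_j |\mathbf{H}_{ij}|$ after noting that the centre of each disc $\mathbf{H}_{ii}$ contributes at most $|\mathbf{H}_{ii}|$ to the radius bound from $\|\mathbf{H}\|_\infty$; but the Rayleigh-quotient plus AM--GM approach above is self-contained and yields the inequality in one line.
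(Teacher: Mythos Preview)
Your proposal is correct and follows essentially the same route as the paper: the Rayleigh-quotient characterization, expansion of the quadratic form, the AM--GM bound $|x_i\overline{x_j}|\le\tfrac12(|x_i|^2+|x_j|^2)$, and the symmetrised row-sum estimate are exactly the steps the paper uses. If anything your version is slightly more careful, since you bound $|\mathbf{x}^*\mathbf{H}\mathbf{x}|$ rather than just $\mathbf{x}^*\mathbf{H}\mathbf{x}$, which is what is actually needed given that $\lambda_{\max}[\mathbf{H}]=\max_i|\lambda_i|$.
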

\begin{theorem}
    Suppose that $\xi_1, \xi_2, \dots$ are i.i.d. random variables with 0 mean and at least $2k$ finite moments. Then for all $\varepsilon>0$
    \begin{equation}
        \Prob\left(\left|\frac{\sum_{i=1}^N \xi_i}{N}\right|>\varepsilon \right) =O(N^{-k}).
    \end{equation}
\end{theorem}
\begin{proof}
By Markov inequality
\begin{multline}
    \Prob\left(\left|\frac{\sum_{i=1}^N \xi_i}{N}\right|>\varepsilon \right) \leq\varepsilon^{-2k} \frac{\Exp\left(\sum_{i=1}^N \xi_i\right)^{2k}}{N^{2k}} \\
    = \varepsilon^{-2k} \left(\begin{array}{l} n \\ k \end{array}\right)N^{-2k} + O\left(N^{-k-1}\right) = O\left(N^{-k}\right)
\end{multline}
\end{proof}
\begin{lemma}
    Suppose that the entries of $S$-dimensional random vector $\x$  are i.i.d. random variables with first 4 moments independent of $\x$ and unit variance. Then
    \begin{equation}
        S\Prob\left(\left\lVert\frac{\lVert \x\rVert^2}{S}-1\right\rVert >\varepsilon\right) \underset{S \to +\infty}{\to} 0
    \end{equation}
\end{lemma}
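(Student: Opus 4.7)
The plan is to reduce the claim to an application of the preceding theorem (Markov's inequality with the $2k$-th moment) in the case $k=2$, applied to the centred variables $\xi_i := x_i^2 - 1$. First, writing $\x = (x_1, \ldots, x_S)$, I would observe
\[
\frac{\lVert \x\rVert^2}{S} - 1 \;=\; \frac{1}{S}\sum_{i=1}^{S} (x_i^2 - 1) \;=\; \frac{1}{S}\sum_{i=1}^{S} \xi_i,
\]
so the question reduces to controlling the deviation of an i.i.d.\ sum from its mean. The $\xi_i$'s are i.i.d.\ by construction; they have mean zero because the entries of $\x$ have unit variance and (by the paper's standing convention on $Z_{t,s}$) zero mean, so $\Exp \xi_i = \Exp x_i^2 - 1 = 0$; and they have a finite fourth moment whenever $x_i$ has a finite eighth moment, since $\Exp \xi_i^4 = \Exp(x_i^2-1)^4$ is a polynomial of degree eight in $x_i$.

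Second, I would invoke the preceding theorem with $k=2$, which yields
\[
\Prob\!\left(\left|\frac{1}{S}\sum_{i=1}^S \xi_i\right| > \varepsilon\right) \;=\; O(S^{-2}),
\]
so that $S\,\Prob(|\lVert \x\rVert^2/S - 1|>\varepsilon) = S \cdot O(S^{-2}) = O(S^{-1}) \to 0$ as $S \to \infty$. More generally, the same reasoning gives the bound $O(S^{-(k-1)})$ whenever $x_i$ has $4k$ finite moments; in the paper's applications the noise entries $Z_{t,s}$ possess all moments, so one may take $k$ as large as desired.

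There is no real conceptual obstacle here: the whole argument is a routine moment calculation in the style of the previous theorem. The one delicate point — and hence the main thing to verify — is the moment bookkeeping. The hypothesis as stated asks only for four moments of $\x$, but squaring picks up moments at double the rate, so applying the preceding theorem with $k=2$ to the $\xi_i$'s actually consumes eight moments of $x_i$. Since in the body of the paper the relevant entries satisfy the stronger all-moments assumption, this gap is harmless for the downstream corollary on $\max_i \lVert \mathbf{z}_i \rVert^2 / S$.
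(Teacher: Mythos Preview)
Your approach is exactly the one the paper intends: the lemma is placed immediately after the Markov/moment theorem precisely so that one applies it to $\xi_i = x_i^2 - 1$. The paper in fact gives no separate proof of this lemma, so there is nothing further to compare.

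Your observation about the moment bookkeeping is correct and worth recording. As literally stated, the hypothesis of four moments on $x_i$ only furnishes two moments on $\xi_i$, which via the preceding theorem (or Chebyshev) yields $\Prob(\cdot)=O(S^{-1})$ and hence only $S\,\Prob(\cdot)=O(1)$; one genuinely needs eight moments on $x_i$ (so that $\xi_i$ has four) to push through $k=2$ and obtain $O(S^{-2})$. This is a slip in the paper's hypothesis rather than in your argument, and, as you note, it is immaterial for the downstream Corollary since the entries $Z_{t,s}$ are assumed to have all moments.
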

\begin{corollary}
For matrix $\mathbf{Z}$ as in the statement of the Theorem \ref{thm:max_eigenvalue}
\begin{equation}
    \max_i\left|\lVert\mathbf{z}_i\rVert^2 -1\right| \underset{S\to +\infty}{\to} 0
\end{equation}
\label{col:a_diag}
\end{corollary}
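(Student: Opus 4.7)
The plan is to combine the preceding Lemma with a union bound. I begin by observing that each row $\mathbf{z}_i$ of $\mathbf{Z}$ is an $S$-dimensional vector of i.i.d. entries with mean $0$, unit variance, and (by the hypothesis of Theorem \ref{thm:max_eigenvalue} that the entries of $\mathbf{Z}$ have all moments) a finite fourth moment in particular. Hence the hypotheses of the preceding Lemma are satisfied for each $\mathbf{z}_i$ individually, and restating its conclusion in little-$o$ notation yields
\begin{equation*}
\Prob\!\left(\left|\tfrac{\lVert \mathbf{z}_i\rVert^2}{S}-1\right|>\varepsilon\right)=o\!\left(\tfrac{1}{S}\right).
\end{equation*}

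Next I would apply a union bound over the $T$ identically-distributed rows of $\mathbf{Z}$:
\begin{equation*}
\Prob\!\left(\max_{1\le i\le T}\left|\tfrac{\lVert \mathbf{z}_i\rVert^2}{S}-1\right|>\varepsilon\right)\le T\cdot \Prob\!\left(\left|\tfrac{\lVert \mathbf{z}_1\rVert^2}{S}-1\right|>\varepsilon\right)=T\cdot o\!\left(\tfrac{1}{S}\right).
\end{equation*}
Since $T/S\to y<\infty$, the right-hand side is $o(T/S)=o(1)$ and therefore tends to $0$. As $\varepsilon>0$ was arbitrary, this gives convergence in probability of the maximum to $0$, which is the desired conclusion.

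There is no substantive obstacle here: the preceding Lemma has been stated with the extra factor of $S$ in front of the probability precisely so that a union bound over $T=O(S)$ identically distributed copies still collapses to $o(1)$. The only subtlety worth flagging is cosmetic: as stated, the corollary writes $\lVert \mathbf{z}_i\rVert^2-1$ rather than the normalized $\lVert \mathbf{z}_i\rVert^2/S-1$, but the latter is the quantity actually invoked in the proof of Lemma \ref{lemma:diag_frechet} (where $\tfrac{\lVert \mathbf{z}_i\rVert_2^2}{S}$ appears), and is what the argument above establishes.
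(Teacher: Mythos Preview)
Your argument is correct and is exactly the intended derivation: the paper states this as an immediate corollary of the preceding Lemma, and the only step is the union bound over the $T=O(S)$ rows that you supply. Your observation about the missing $1/S$ normalization in the displayed statement is also apt; the normalized version is what is used downstream.
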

\begin{lemma}
    Suppose that the entries of $S$-dimensional vectors $\x$ and $\y$ are i.i.d. random variables with the first 6 moments independent of $S.$ Then
    \begin{equation}
        S^2 \Prob \left(\frac{\left<\x, \y\right>2}{S^2}>\epsilon \right) \to 0.
    \end{equation}
\end{lemma}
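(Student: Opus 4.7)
The plan is to prove the lemma by a sixth-moment Markov bound, interpreting the displayed equation as
\[
S^{2}\,\Prob\!\left(\frac{\left<\x,\y\right>^{2}}{S^{2}}>\varepsilon\right)\to 0,
\]
i.e.\ that $\Prob(|\left<\x,\y\right>|>\varepsilon S)=o(S^{-2})$. Since the preceding lemma about $\|\x\|^{2}/S$ was invoked as a consequence of a fourth-moment Markov bound giving a $O(S^{-2})$ probability (which combined with $S$ to give $o(1)$), the natural counterpart here, where we need an extra factor of $S$, is to use one more pair of moments. The assumption that $\x,\y$ have mean zero and unit variance (inherited from the $Z_{ts}$ setting of Theorem \ref{thm:max_eigenvalue}) and the independence of $\x$ and $\y$ will be the two ingredients that make the argument work.

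First, I would apply Markov's inequality in the form
\[
\Prob\!\left(\left|\left<\x,\y\right>\right|>\varepsilon S\right)\le \frac{\Exp\!\left[\left<\x,\y\right>^{6}\right]}{\varepsilon^{6}S^{6}}.
\]
The main content of the proof is therefore an estimate
\[
\Exp\!\left[\left<\x,\y\right>^{6}\right]=O(S^{3}),
\]
which combined with the Markov bound yields $\Prob(|\left<\x,\y\right>|>\varepsilon S)=O(S^{-3})$, and multiplying by $S^{2}$ gives the required $O(S^{-1})\to 0$.

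To obtain the bound on the sixth moment, I would expand
\[
\Exp\!\left[\left<\x,\y\right>^{6}\right]=\sum_{i_{1},\ldots,i_{6}=1}^{S}\Exp\!\left[x_{i_{1}}\cdots x_{i_{6}}\right]\,\Exp\!\left[y_{i_{1}}\cdots y_{i_{6}}\right],
\]
using independence of $\x$ and $\y$. Because the $x_{j}$'s are i.i.d.\ with mean $0$, a tuple $(i_{1},\ldots,i_{6})$ contributes a nonzero term to $\Exp\!\left[x_{i_{1}}\cdots x_{i_{6}}\right]$ only if every distinct index appearing in the tuple is repeated at least twice; the same is true for the $y$ factors. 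Hence the surviving tuples use at most three distinct indices, and the number of such tuples is $O(S^{3})$. Since each individual expectation is bounded by a constant depending only on the first six moments of $x_{1},y_{1}$ (which are assumed to be independent of $S$), we get the desired $O(S^{3})$ bound.

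The only mild obstacle is verifying the combinatorial counting in the moment expansion carefully, i.e.\ checking that tuples in which some index appears an odd number of times contribute zero and that the remaining tuples can use no more than three distinct values from $\{1,\ldots,S\}$. This is routine and is essentially the same bookkeeping used in the classical fourth-moment method for $\|\x\|^{2}/S\to 1$, just pushed one order higher; no new idea is required beyond independence and the vanishing first moment.
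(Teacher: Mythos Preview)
Your proposal is correct and matches the paper's approach: the paper does not spell out a proof of this lemma, but it is clearly intended to follow from the preceding moment-bound theorem (Markov's inequality with $2k$ moments gives $O(N^{-k})$) applied with $\xi_i=x_i y_i$ and $k=3$, which is exactly your sixth-moment argument unpacked. Your remark that the mean-zero hypothesis is implicit (inherited from the $Z_{ts}$ in Theorem~\ref{thm:max_eigenvalue}) is also the right reading, since without it the statement would be false.
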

\begin{corollary}
For matrix $\mathbf{Z}$ as in the statement of the Theorem \ref{thm:max_eigenvalue}
\begin{equation}
    \frac{\max_{i\neq j}|\left<\mathbf{z}_i, \mathbf{z}_j\right>|}{S^{1/2+\epsilon}} \underset{\Prob}{\to} 0.
\end{equation}
\label{col:a_nondiag}
\end{corollary}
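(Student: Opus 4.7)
The plan is to deduce Corollary \ref{col:a_nondiag} from the preceding lemma by a union bound over the $T(T-1)$ ordered pairs $(i,j)$ with $i \neq j$. Since $T/S \to y > 0$, we have $T(T-1) = O(S^2)$, so the $S^{-2}$-type decay afforded by the preceding lemma is exactly what is needed to beat the number of pairs.

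Fix $\delta, \epsilon > 0$ and write
\[
\Prob\left(\frac{\max_{i \neq j} |\left<\mathbf{z}_i, \mathbf{z}_j\right>|}{S^{1/2+\epsilon}} > \delta\right) \leq \sum_{i \neq j} \Prob\left(\frac{\left<\mathbf{z}_i, \mathbf{z}_j\right>^2}{S^{1+2\epsilon}} > \delta^2\right).
\]
For any fixed $i \neq j$, the rows $\mathbf{z}_i$ and $\mathbf{z}_j$ are independent $S$-dimensional vectors with i.i.d.\ entries of mean zero, unit variance, and (by the hypotheses of Theorem \ref{thm:max_eigenvalue}) finite moments of every order, so the preceding lemma applies to each summand, and the right-hand side is $T(T-1) \cdot o(S^{-2}) = o(1)$ as $S \to \infty$.

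The subtle point is that, taken literally, the preceding lemma uses only six moments and yields $\Prob(\cdots) = O(S^{-6\epsilon})$, which beats the factor $T^2 \asymp S^2$ only when $\epsilon > 1/3$. For arbitrarily small $\epsilon > 0$ one has to invoke higher moments: since all moments of $Z_{ts}$ exist, the same Markov argument with the $2k$th moment gives
\[
\Exp \left<\mathbf{z}_i, \mathbf{z}_j\right>^{2k} = \sum_{s_1, \dots, s_{2k}} \Exp\left[Z_{i, s_1} Z_{j, s_1} \cdots Z_{i, s_{2k}} Z_{j, s_{2k}}\right] = O(S^k),
\]
since only multi-indices in which every summation index appears at least twice contribute. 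Markov's inequality then produces $\Prob(\cdots) = O(S^{-2k\epsilon})$, and choosing $k > 1/\epsilon$ makes the sum over the $O(S^2)$ pairs tend to zero. The main technical step where care is needed is justifying the combinatorial moment estimate $\Exp \left<\mathbf{z}_i, \mathbf{z}_j\right>^{2k} = O(S^k)$ uniformly in $S$; once this is in place, the union bound and the $\epsilon$-dependent choice of $k$ are routine, and the corollary follows.
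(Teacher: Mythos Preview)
Your argument is correct and follows exactly the route the paper intends: the corollary is stated without proof as an immediate consequence of the preceding single-pair lemma via a union bound over the $O(S^2)$ pairs. Your observation that the six-moment version of the lemma only handles $\epsilon>1/3$, and that one must pass to the $2k$-th moment with $k>1/\epsilon$ (legitimate because Theorem~\ref{thm:max_eigenvalue} assumes all moments of $Z_{ts}$ exist), is a genuine refinement---the paper glosses over this point, so your write-up is in fact more careful than the original.
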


\begin{lemma}
For matrix $\mathbf{\Sigma}$ and $a_T$ as in the statement of the Theorem \ref{thm:max_eigenvalue}
\begin{equation}
    \frac{\max_{i\leq T}\sigma_i^2}{a_T}\underset{d}{\to} \xi
\end{equation}
where $\xi$ is a random variable distributed as $\text{Fréchet}(\frac{\alpha}{2}, 1, 0)$
\label{lemma:frechet}
\end{lemma}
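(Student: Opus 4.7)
The plan is to apply the classical extreme value theorem for i.i.d. regularly varying random variables. Because $\sigma_i$ has a regularly varying tail with exponent $\alpha$, the squared variable $\sigma_i^2$ is regularly varying with exponent $\alpha/2$: writing $\overline{H}(u) := 1 - H(u) = \Prob(\sigma_i^2 > u)$, we have $\overline{H}(xu)/\overline{H}(u) \to x^{-\alpha/2}$ as $u \to \infty$ for every $x > 0$. The normalization $a_T$ is chosen precisely so that $\overline{H}(a_T) = 1/T$, which is the canonical quantile-based scaling in extreme value theory for the Fréchet domain of attraction.

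First I would express the distribution function of the maximum directly using independence. For fixed $x > 0$,
\begin{equation}
\Prob\left(\frac{\max_{i\leq T} \sigma_i^2}{a_T} \leq x\right) = \Prob\left(\sigma_1^2 \leq x a_T\right)^T = \bigl(1 - \overline{H}(x a_T)\bigr)^T.
\end{equation}
Next I would invoke regular variation at the point $u = a_T$ (which tends to infinity as $T \to \infty$ by Remark \ref{rem:aTorder}) to write
\begin{equation}
\overline{H}(x a_T) = \overline{H}(a_T) \cdot \frac{\overline{H}(x a_T)}{\overline{H}(a_T)} = \frac{1}{T}\bigl(x^{-\alpha/2} + o(1)\bigr).
\end{equation}

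The convergence $\bigl(1 - x^{-\alpha/2}/T + o(1/T)\bigr)^T \to \exp\bigl(-x^{-\alpha/2}\bigr)$ then follows from the standard limit, and this is exactly the c.d.f.\ of $\text{Fr\'echet}(\alpha/2, 1, 0)$. For $x \leq 0$ the probability is trivially zero since $\sigma_i^2 \geq 0$. I would also briefly note that $a_T \to \infty$ is needed to legitimately apply the regular variation limit uniformly on compact sets of $x$; this follows because $\overline{H}$ is continuous and decreasing to zero, so $a_T = \overline{H}^{-1}(1/T) \to \infty$.

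The main work in this lemma is genuinely just the uniform control in the regular variation step, which is essentially automatic: regular variation is uniform on compact subsets of $(0,\infty)$ by the Uniform Convergence Theorem for regularly varying functions (see, e.g., Bingham--Goldie--Teugels). There is no real obstacle, so the proof is short; the lemma is included as a self-contained building block for the diagonal part of Theorem \ref{thm:max_eigenvalue} via Lemma \ref{lemma:diag_frechet}.
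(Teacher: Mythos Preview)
Your proposal is correct and follows essentially the same route as the paper: express the c.d.f.\ of the maximum via independence as $(1-\overline{H}(xa_T))^T$, use the regular variation of $\overline{H}$ together with the defining relation $\overline{H}(a_T)=1/T$ to get $\overline{H}(xa_T)\sim x^{-\alpha/2}/T$, and pass to the limit $\exp(-x^{-\alpha/2})$. Your version is slightly more careful (noting $a_T\to\infty$, handling $x\le 0$, and citing the Uniform Convergence Theorem), but there is no substantive difference in method.
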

\begin{proof}
\begin{multline}
    \Prob\left(\frac{\max_{i\leq T}\sigma_i^2}{a_T} > x \right) = \Prob\left(\frac{\max_{i\leq T}\sigma_i^2}{a_T} > x a_T \right) \\ = 1- \Prob\left(\frac{\max_{i\leq T}\sigma_i^2}{a_T} \leq x a_T \right) = 1 -\left( 1 - H(xa_T)\right)^T
\end{multline}
   As $H(a_T) = \frac{1}{T} $ and the distribution is regularly varying, $ H(xa_T) \sim \frac{x^{-\alpha/2}}{T}.$
Therefore,
\begin{equation}
   1 -\left( 1 - H(xa_T)\right)^T \sim 1- \left(1 -\frac{x^{-\alpha/2}}{T}\right)^T \underset{T \to +\infty }{\to }1 - e^{-x^{-\alpha/2}},
\end{equation}
which is the tail of $\text{Fréchet}(\frac{\alpha}{2}, 1, 0)$ distribution.
\end{proof}

\end{subappendices}

\bibliographystyle{alpha}
\bibliography{bibliography}

\end{document}